\newtheorem{theorem}{Theorem}
\newtheorem{corollary}[theorem]{Corollary}
\newtheorem{lemma}[theorem]{Lemma}
\newtheorem{observation}[theorem]{Observation}
\newtheorem{definition}{Definition}
\newtheorem{openproblem}{Open Problem}
\newtheorem{problem}{Problem}
\newcommand{\Sym}[1]{{\rm Sym}(#1)}
\newcommand{\Alt}[1]{{\rm Alt}(#1)}
\newcommand {\leftpush}  {\includegraphics{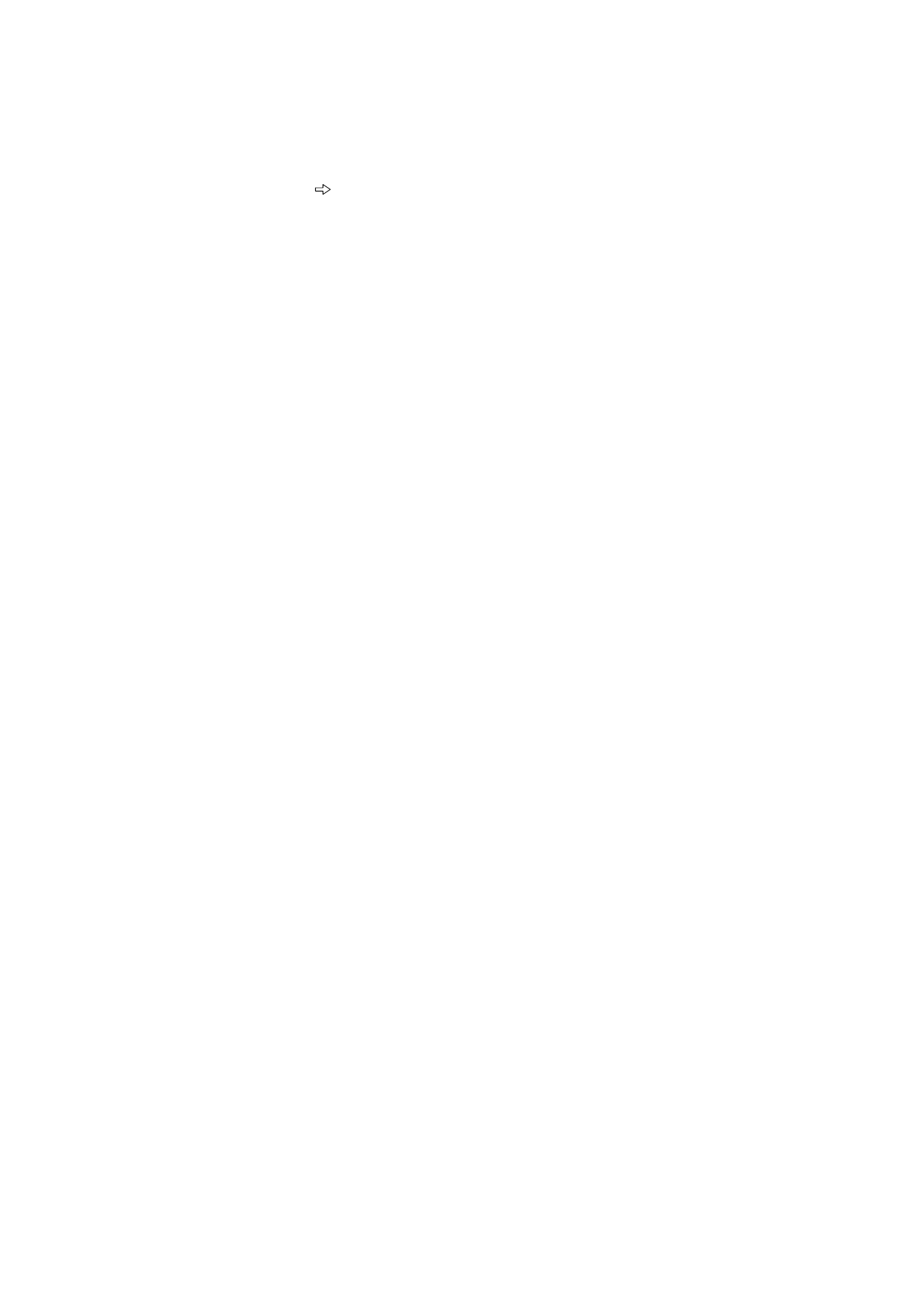}}
\newcommand {\rightpush} {\includegraphics{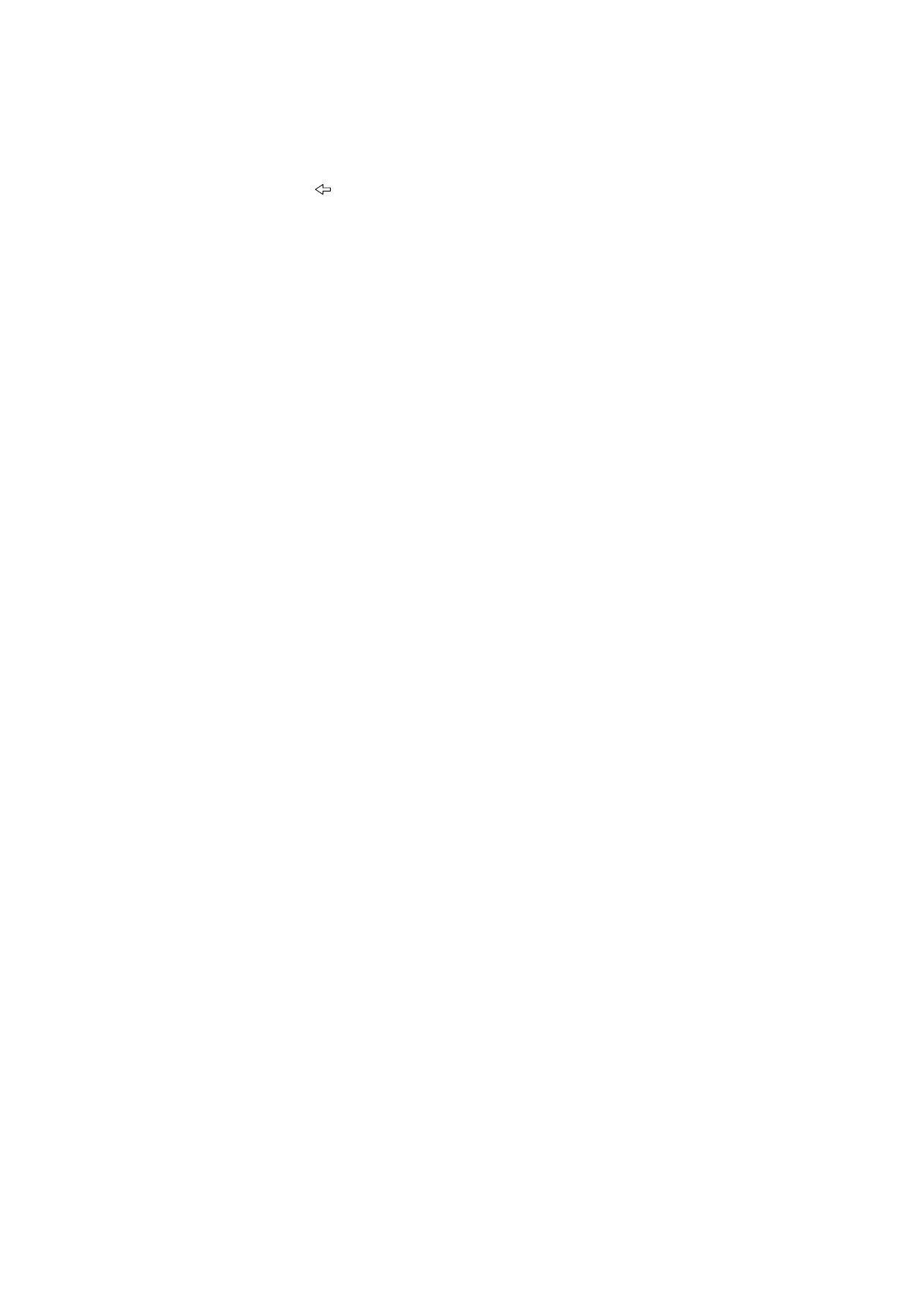}}
\newcommand {\uppush}    {\raisebox{-1pt}{\includegraphics{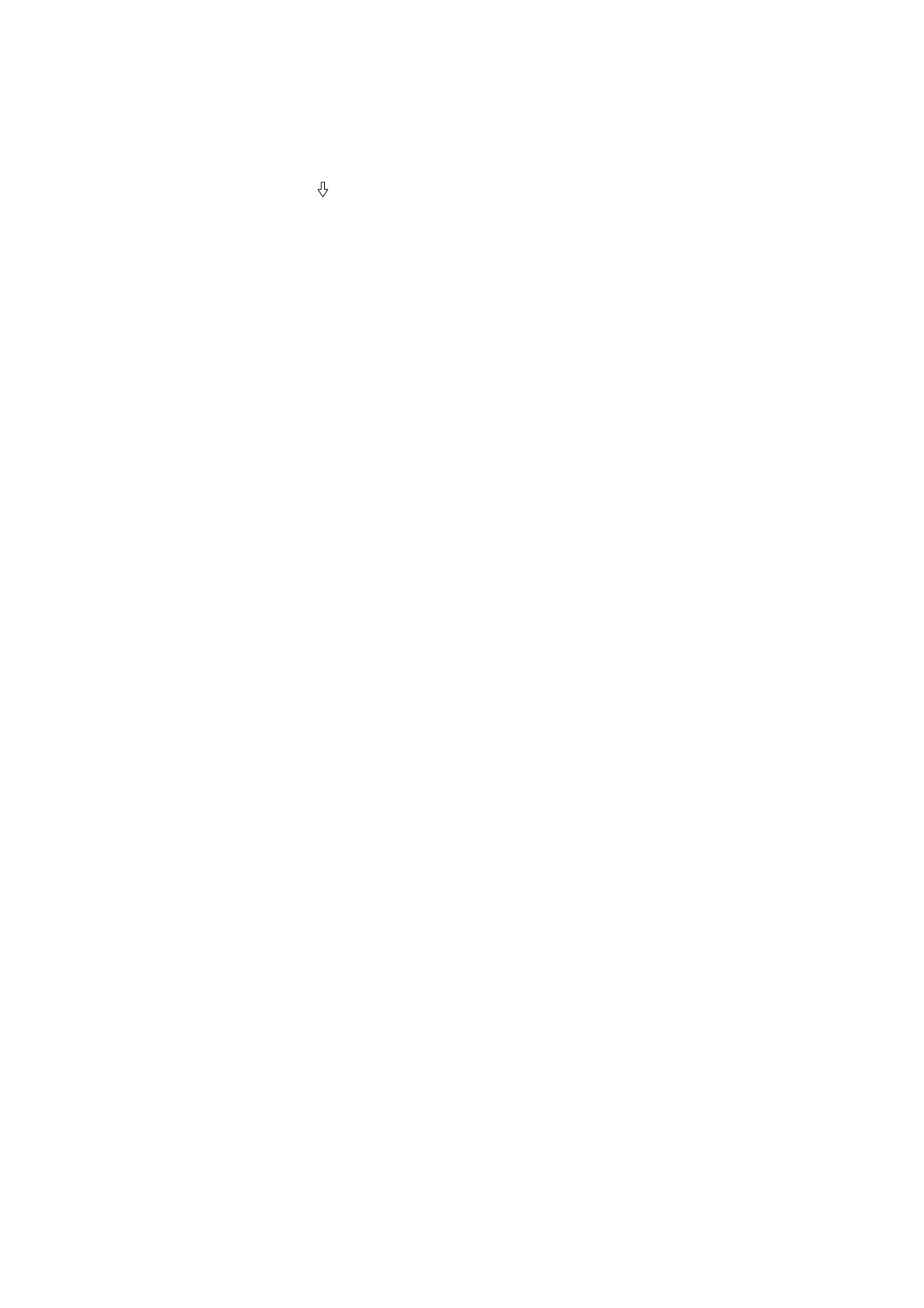}}}
\newcommand {\downpush}  {\raisebox{-1pt}{\includegraphics{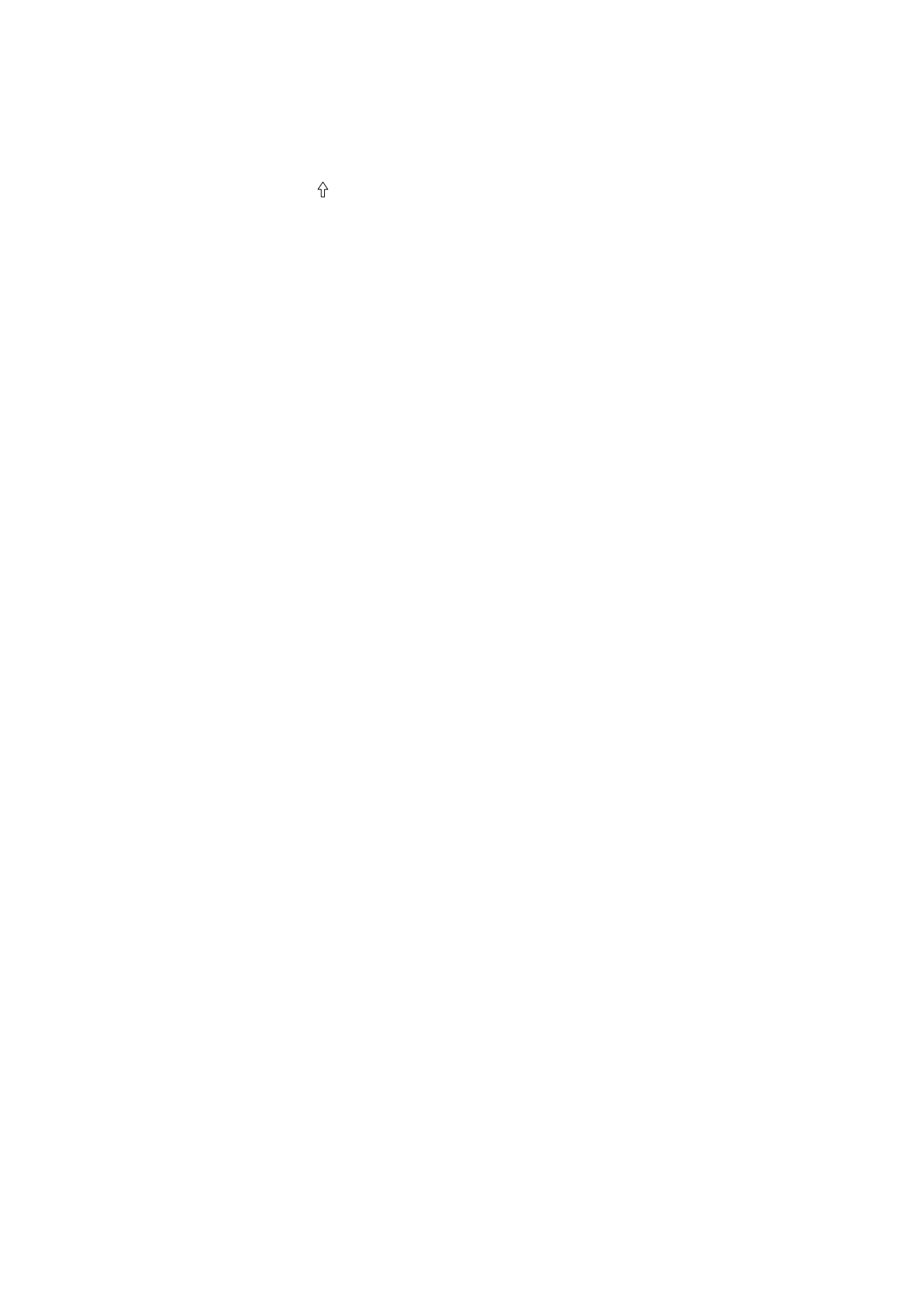}}}
\begin{document}
\title{Pushing Blocks by Sweeping Lines}
\author{
Hugo A.~Akitaya\thanks{University of Massachusetts Lowell, \protect\url{hugo_akitaya@uml.edu}.}\and
Maarten L\"offler\thanks{Department of Information and Computing Sciences, Utrecht University, \protect\url{m.loffler@uu.nl}.}\and
Giovanni Viglietta\thanks{Department of Computer Science and Engineering, University of Aizu, \protect\url{viglietta@gmail.com}.}
}
\date{}

\maketitle

\begin{abstract}
We investigate the reconfiguration of $n$ blocks, or ``tokens'', in the square grid using \emph{line pushes}. A line push is performed from one of the four cardinal directions and pushes all tokens that are maximum in that direction to the opposite direction by one unit. Tokens that are in the way of other tokens are displaced in the same direction, as well.

Similar models of manipulating objects using uniform external forces match the mechanics of existing games and puzzles, such as Mega Maze, 2048 and Labyrinth, and have also been investigated in the context of self-assembly, programmable matter and robotic motion planning.
The problem of obtaining a given shape from a starting configuration is known to be NP-complete.

We show that, for every $n$, there are \emph{sparse} initial configurations of $n$ tokens (i.e., where no two tokens are in the same row or column) that can be rearranged into any $a\times b$ box such that $ab=n$. However, only $1\times k$, $2\times k$ and $3\times 3$ boxes are obtainable from any arbitrary sparse configuration with a matching number of tokens.
We also study the problem of rearranging labeled tokens into a configuration of the same shape, but with permuted tokens.
For every initial ``compact'' configuration of the tokens, we provide a complete characterization of what other configurations can be obtained by means of line pushes.
\end{abstract}

\tableofcontents

\section{Introduction}\label{s:1}
\paragraph{Background.}
Manipulating a set of objects with uniform external forces is a concept that appears  in many game and puzzle mechanics, such as Mega Maze~\cite{MegaMaze}, the 2048 puzzle~\cite{2048-1,2048-2}, Tilt~\cite{BDF+19} and dexterity games such as the Labyrinth marble game~\cite{labyrinth} and Pigs in Clover~\cite{Pigs}. 
It also appears in self-assembly, programmable matter and robotic motion planning, with many applications involving controlling particles in the micro and nanoscale~\cite{becker2013reconfiguring,BDF+19,BGC+20,BLC+19}.
Having the particles being controlled by a uniform external force is of particular interest since it might be unfeasible to control them individually due to their small scale.
The most studied model for self-assembly using uniform external forces is the \emph{tilt} model.
Objects called ``tokens'' are in a 2D board where some locations are marked ``blocked''. 
A tilt move moves all tokens maximally in one of the four cardinal directions, stopping the movement only if there is a collision with a blocked position or with another token~\cite{becker2013reconfiguring}.
Another model that resembles the dexterity games model uses a \emph{single step} to control tokens via uniform signals to move a single unit in one of the cardinal directions~\cite{becker2013massive, UCNC2023, caballero2020building}. 

Akitaya et al.~\cite{trash} introduced the \emph{trash compaction} problem that displaces tokens in a square grid via a \emph{line push}, or simply \emph{push}.
A push is also caused by an external force, but unlike the tilt, each token moves by at most one unit in the direction of the force, perhaps better approximating a dexterity game model.
Informally, a push is applied in one of the four cardinal directions from which we sweep an axis-aligned line. The first tokens hit by the line are displaced by one unit; in turn, these tokens might displace other tokens, and so on. Figure~\ref{fig:example-initial} illustrates an example.
If we consider configurations equivalent under translation, the same push operation can be seen as placing a line barrier on one of the sides of the bounding box and applying a uniform force pushing all tokens towards the barrier by at most one unit.  Note that pushes are not necessarily reversible.

\begin {figure}
  \centering
  \includegraphics [scale=1.5] {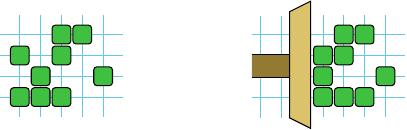}
  \caption {A configuration of tokens in the square lattice (left), and the configuration after a $\leftpush$ push (right). Tokens can be thought of as being pushed by a line coming from the left or, equivalently, as ``falling'' to the left without exiting their bounding box.}
  \label {fig:example-initial}
\end {figure}

The trash compaction problem gives an initial configuration of $1\times 1$ tokens in the square grid and asks whether it can be reconfigured via pushes into a rectangular box of given dimensions.
It is shown in~\cite{trash} that the problem is NP-complete in general, but it is polynomial-time solvable for $2\times k$ rectangles, where $k$ is an arbitrary constant.

\paragraph{Our contributions.}
In this paper, we consider reconfiguration problems using pushes in two scenarios: \emph{labeled} and \emph{unlabeled}.
Two tokens with the same label are indistinguishable;
a configuration is unlabeled if all tokens have the same label, and labeled otherwise.

In Section~\ref{s:2} we make some preliminary observations, where we study certain important configurations called \emph{compact} and the ways they can be reconfigured.

Next, we investigate two types of puzzles.
The first is called \emph{Compaction Puzzle}, and is equivalent to the trash compaction problem.
In Section~\ref{s:3} we show that, for \emph{sparse} configurations of unlabeled tokens (i.e., where no two tokens are in the same row or column), only rectangles of sizes $1\times k$, $2\times k$, and $3\times 3$ can be obtained in general. That is, we give algorithms to push tokens into these rectangles, and we show that there are sparse configurations that cannot be pushed into rectangles of any other size.

The second puzzle is called \emph{Permutation Puzzle}, and the goal is to reconfigure a labeled configuration into another; Figure~\ref{fig:fun2022-puzzle} shows an example. 
In Section~\ref{s:4} we give a complete characterization of which labeled compact configurations can be obtained from one another. Namely, in Section~\ref{s:4.1} we prove that only even permutations of the tokens are possible, and in Section~\ref{s:4.2} we show exactly which even permutations can be obtained, depending on the initial configuration.
Our characterization can be considered a (partial) universality result; that is, after ruling out cases using some easily identifiable necessary conditions, every pair of configurations can be reconfigured into each other (Theorem~\ref{theorem-final} gives a precise statement).

Section~\ref{s:5} concludes the paper with some directions for future research.

A preliminary version of this paper appeared in~\cite{FUN2022}.

\begin {figure}
  \centering
  \includegraphics [scale=1.15] {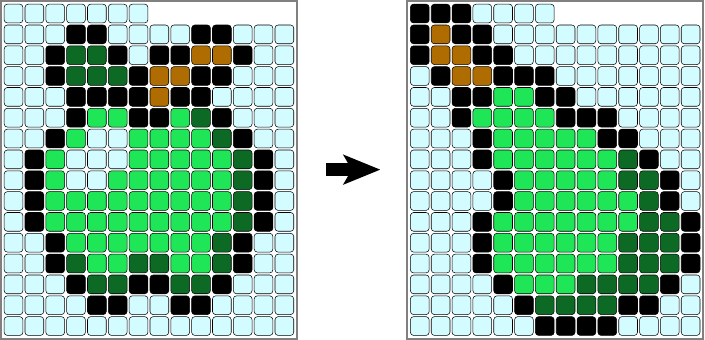}
  \caption {A Permutation Puzzle, where the goal is to rearrange the tokens from the left configuration to the right configuration by means of line pushes. Due to Theorem~\ref{theorem-final}, this puzzle is solvable (the top row has seven empty cells, and so the \emph{core} is empty; see Section~\ref{s:4.2}).}
  \label {fig:fun2022-puzzle}
\end {figure}

\paragraph{Related work.}
There is a rich literature regarding reconfiguration through pushing objects, including sliding-block puzzles \cite{Hearn05, D01-sliding}, block-pushing puzzles~\cite{demaine2003pushing,hearn2005pspace}, the 15-puzzle~\cite{kornhauser1984coordinating}, the 2048 puzzle~\cite{2048-1,2048-2}, etc.
In the block-pushing model, a $1\times 1$ \emph{agent} moves through empty positions in the square lattice and can displace (push) objects that are \emph{free}, while some objects are \emph{blocked} and cannot be moved.
The problem of whether the agent can reach a target position, depending on the model of the push move, is known to be PSPACE-complete.
Note that this operation, albeit being called ``push'', differs from our model since the agent can move specific individual objects.

As previously discussed, models that consider movement of objects by uniform external forces have received some attention due to their applications to programmable matter.
In the tilt model with $1\times 1$ free and blocked objects, deciding whether a given free object can get to a given position is PSPACE-complete~\cite{BGC+20}. This implies the same complexity for the reconfiguration problem.
Other papers considered some form of universality given a certain placement of blocked objects such as reconfiguration~\cite{becker2013reconfiguring,zhang2017rearranging}, particle computations~\cite{BDF+19}, and building shapes when given edges of free objects stick together when in contact~\cite{BLC+19,BGC+20}.
In the single-step model, for a given configuration of blocked objects, reconfiguration is NP-complete with single steps limited to two directions~\cite{caballero2020hardness}, and universality results also exist for a special configuration of blocked objects~\cite{caballero2021fast}.
Note that the hardness of the trash compaction problem~\cite{trash} implies the hardness of the problem of constructing a given shape in the single-step model.

Concerning our line-push model, in addition to the aforementioned paper~\cite{trash} about trash compaction, a second short paper has appeared~\cite{core}, introducing some 2-player games based on the same mechanic.

\section{Definitions and Preliminaries}\label{s:2}
Let $\mathcal{L}$ be the 2D square lattice.
Let $\mathcal{T}$ be a set of $n$ labeled objects called \emph{tokens}, and let $\Sigma$ be the set of labels.
A \emph{configuration} of $\mathcal{T}$ is an arrangement of $\mathcal{T}$ in $\mathcal{L}$ where no two tokens occupy the same position.
Formally, a configuration is a function $C\colon\mathcal{L}\to\Sigma\cup\{{\rm empty}\}$ where the cardinality of the preimage of an element $\ell\in \Sigma$ is the number of tokens labeled $\ell$, and $|C^{-1}(\Sigma)|=|\mathcal T|=n$.
A lattice position $(x,y)$ is \emph{full} if its image is in $\Sigma$, and \emph{empty} otherwise.
Notice that we do not distinguish two tokens that have the same label.
However we may refer to full positions and tokens interchangeably for ease of reference. 
The \emph{bounding box} of $C$ is the minimum rectangular subset of $\mathcal{L}$ containing all full positions. 
In the following, we will identify configurations that are equal under translation.

A \emph{push} is an operation that takes a configuration $C$ and a direction $d\in\{\leftpush, \rightpush, \uppush, \downpush\}$ and returns a configuration $C'$ as follows.
We describe a $\leftpush$ push; the other cases are symmetric.
Informally, a $\leftpush$ push moves all leftmost tokens one unit to the right, further displacing tokens to the right if there are collisions.
Without loss of generality, assume that the lower-left corner of the bounding box is $(0,0)$, applying the appropriate translation otherwise.
For all columns $i\ge 1$ from left to right, and for all rows $j$, if $(i,j)$ is full and $(i-1,j)$ is empty, move the token from $(i,j)$ to $(i-1,j)$.
Finally, translate the configuration making the lower-left corner of the bounding box $(1,0)$.

Note that the last step just translates the configuration, producing an equivalent one.
Then, we can also consider an alternative informal interpretation of a push: 
A $\leftpush$ push places a vertical barrier to the left of $x=0$ and lets all tokens that can move ``fall'' towards the left by one unit. We call this interpretation the ``gravity'' formulation of the puzzle.

Observe that pushes are not necessarily reversible, and every push either does not affect the size of the bounding box, or decreases its area by exactly one row or column.
We denote a sequence of pushes by the respective sequence of directions $\langle d_1 \ldots d_m\rangle$.
We use the notation $d^k$ to express $k$ repetitions of direction $d$.
A configuration is \emph{compressible} if a push can decrease the area of its bounding box, or \emph{incompressible} otherwise.
By definition, it is easy to see that a configuration is incompressible if and only if its bounding box contains a full row and a full column.
If $|\Sigma|=1$, i.e., all tokens have the same label, we call the configuration \emph{unlabeled}. When no restrictions are made on $|\Sigma|$, we say that the configuration is \emph{labeled}.

We can now define our Compaction Puzzle.

\begin{problem}[Compaction Puzzle] \label{prob:compaction}
	Given an unlabeled starting configuration, is there a sequence of pushes that produces a configuration whose tokens form an $a\times b$ box?
\end{problem}

If we start from any configuration and we keep pushing in the two directions $\leftpush$ and $\downpush$ alternately, we eventually reach a configuration for which any push in these two directions produces an equivalent configuration.
We call such a configuration \emph{canonical}.
By definition, a canonical configuration forms an orthogonally convex polyomino with its leftmost column and bottommost row full.
A \emph{compact} configuration is a configuration that can be obtained from a canonical configuration by pushes (see Figure~\ref{fig:canonical}). 
Note that all canonical configurations are also compact.

\begin {figure}
  \centering
  \includegraphics [scale=1.25] {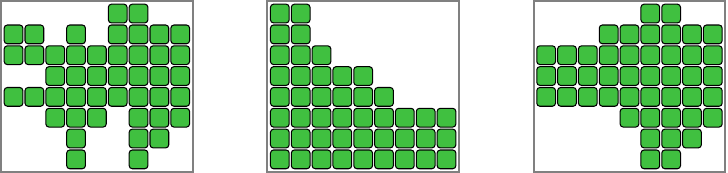}
  \caption {An incompressible non-compact configuration (left); a canonical configuration (center); a compact configuration (right). The central and right configurations are compatible.}
  \label {fig:canonical}
\end {figure}

\begin{observation}
	\label{obs:compact-shape}
	A configuration $C$ is compact if and only if every row $r$ (resp., column) has a single contiguous interval of tokens and its projection on any other row $r'$ (resp., column $c'$) with the same amount or more tokens is contained in the interval of tokens of $r'$ (resp., $c'$).
\end{observation}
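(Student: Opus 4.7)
The plan is to prove the two directions separately.

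For the forward direction, I would proceed by induction on the length of the push sequence that produces $C$ from a canonical configuration $C_0$. The base case is immediate: by definition, a canonical configuration is an orthogonally convex polyomino with its leftmost column and bottommost row full, so each row is a left-flush interval $[0,k_j-1]$ and each column is a bottom-flush interval $[0,m_i-1]$, with monotone profiles; the stated property holds trivially. For the inductive step, I need to verify that each push preserves the property. The key structural fact is that in any configuration satisfying the property, two rows of the same size $s$ must share a common horizontal interval $I_s$, since each is required to contain the other. Consequently a $\leftpush$ (the other directions being symmetric) acts rigidly on each size-class at once: the whole class shifts left by one unit if its common interval does not touch column $0$, and remains in place otherwise. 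A four-case analysis over pairs of size-classes shows that the row-side nesting is preserved---the only dangerous case, where a smaller-size class is fixed at column $0$ while a larger-size class shifts, is excluded by the pre-push nesting. Column preservation then follows because, by the rigid behavior of size-classes, every column in the new configuration is still the union of entire size-classes, contiguous in both dimensions, with the nesting of column intervals inherited from the row shifts.

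For the backward direction, I would induct on the bounding-box area of $C$. The base case (a single row or column) is trivial. If $C$ is already canonical, there is nothing to prove. Otherwise some boundary row or column is not full, say (WLOG) the leftmost column; the nesting then forces some size-class of rows to have positive left-displacement, so $C^-:=\leftpush(C)$ still satisfies the property (by the forward direction) and is strictly closer to canonical. By the induction hypothesis, $C^-$ is reachable from a canonical configuration. The remaining step is to realize $C$ from $C^-$ by some push sequence. A naive attempt---applying a single $\rightpush$ to $C^-$---fails in general, because $\rightpush$ also shifts size-classes that were untouched by the preceding $\leftpush$. To circumvent this, I would use a longer sequence: apply $\rightpush$ repeatedly to drive every movable size-class to the right boundary, then use $\uppush$ and $\downpush$ to reshuffle vertically, and finally $\leftpush$ to pull the appropriate classes back to their target intervals. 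The nested structure imposed by the property provides enough rigidity to make this manipulation feasible, as one sees already in small examples where a target like $[0,3],[0,2],[2,2]$ is reached from its canonical $[0,3],[0,2],[0,0]$ through a six-push sequence that first parks the smallest class at the right boundary.

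The main obstacle is precisely this reverse construction. A single $\rightpush$ is typically not the inverse of a single $\leftpush$: the former acts on any size-class whose right endpoint is interior, whereas the latter acted only on those with positive left endpoint, and these two sets may differ. The resolution lies in a \emph{parking} technique: push unrelated size-classes flush against the right or top boundary of the bounding box, using the fact that a class flush against a boundary cannot be moved further in that direction by subsequent pushes, and only then maneuver the target class back to its intended interval. Combined with the stringent structural constraints placed on the configuration by the property, this parking technique yields an explicit push sequence from the canonical starting point to $C$, completing the backward direction.
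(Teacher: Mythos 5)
Your overall architecture matches the paper's: the forward direction by showing that the stated property is preserved by pushes (your size-class analysis elaborates what the paper dismisses as ``easily seen''), and the backward direction by canonicalizing $C$ with $\leftpush$ and $\downpush$ pushes and reversing each such push. The gap is exactly where you flag ``the main obstacle'': you never construct the reversal, and the sketch you offer would not work as stated. Reversing a single horizontal push requires only horizontal pushes. The paper's reversal is the explicit sequence $\langle\, \rightpush^k\ \leftpush^{k-1}\,\rangle$ for a suitable $k$, justified by a structural decomposition: letting $H$ be the rows touching the left boundary and $V$ the columns that are full within the rows of $H$, the nesting property forces every token to lie in a row of $H$ or a column of $V$; the tokens outside the rows of $H$ therefore never reach the active boundary during this sequence, so in absolute coordinates only the rows of $H$ move, and they end up exactly one unit to the left of where the $\leftpush$ push placed them. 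Your alternative---parking classes at the right boundary and then using $\uppush$ and $\downpush$ ``to reshuffle vertically''---is both vague and misguided: vertical pushes move tokens between rows and alter the column intervals, so invoking them creates a new, unaddressed obligation to restore the column structure of $C$. An appeal to rigidity and to small examples does not discharge the one step on which the entire converse rests.

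Two smaller points. First, your induction ``on the bounding-box area'' cannot get off the ground: any configuration satisfying the stated property has a full row and a full column (the longest row must contain the projection of every other row, hence spans the bounding box), so it is incompressible and the area never decreases; the correct measure is the total displacement from the canonical form, e.g., the sum over rows of their left offsets plus the sum over columns of their bottom offsets, which each canonicalizing push strictly decreases. Second, your own example already shows that vertical moves are the wrong tool: the target with row intervals $[0,3],[0,2],[2,2]$ is reached from its canonical form by horizontal pushes alone, e.g., $\langle\, \rightpush\ \rightpush\ \rightpush\ \leftpush\,\rangle$.
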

\begin{proof}
Let $\mathcal P$ denote the conditions after the ``if and only if''. Observe that all canonical configurations satisfy $\mathcal P$. Also, $\mathcal P$ is easily seen to be preserved by pushes, and therefore all compact configurations satisfy $\mathcal P$, as well.

In order to prove the converse, we first show that if $C$ satisfies $\mathcal P$, then any push performed in $C$ is reversible.
	The remainder of the proof follows from the fact that we can obtain a canonical configuration by applying $\leftpush$ and $\downpush$ pushes.

Without loss of generality, assume a $\leftpush$ push in $C$ and that the lower-left corner of the bounding box is $(0,0)$.
	Let $H$ be the set of rows with a token in the leftmost column $(0,\cdot)$.
	Let $V$ be the set of columns to the right of $(0,\cdot)$ that contain a token in each row in $H$.
	Observe that each token in $C$ must be in $H$ or in $V$, or else there would be a column whose interval of tokens' projection neither contains nor is contained in the interval of tokens of the first column $(0,\cdot)$.
	Note that applying the $\leftpush$ push causes only tokens in $H$ to move.
	We reverse this operation by applying $\langle\, \rightpush^k\,\rangle$, with $k$ chosen so that the right side of the bounding box coincides with the rightmost column in $V$, then applying $\langle\, \leftpush^{k-1}\,\rangle$. 
	By construction, only tokens in $H$ move, and the sequence brings the leftmost token in each row in $H$ back to its original position in column $(0,\cdot)$. This proof works for the labeled as well as the unlabeled case.
\end{proof}

The proof of Observation~\ref{obs:compact-shape} immediately implies the following corollary.

\begin{corollary}
	\label{cor:reversible}
	A push applied to a (labeled) compact configuration is reversible.\qed
\end{corollary}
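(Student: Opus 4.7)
The plan is to observe that the proof of Observation~\ref{obs:compact-shape} already does all of the work: it explicitly constructs an inverse for a single push applied to any configuration satisfying property $\mathcal P$. Since that same observation establishes the equivalence between being compact and satisfying $\mathcal P$, any compact configuration $C$ satisfies $\mathcal P$, and therefore any push applied to $C$ admits the same explicit inverse. So my ``proof'' of the corollary is really just to extract that inverse construction from the observation's proof and verify that it behaves correctly even in the presence of labels.

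Concretely, I would handle the $\leftpush$ case (the others being symmetric). Following the observation, I define $H$ as the set of rows whose leftmost column is full, and $V$ as the set of columns to the right of column $0$ that are full in every row of $H$; the observation has already shown that every token lies in a row of $H$ or a column of $V$, and that only the tokens in $H$ are displaced by the $\leftpush$ push. Letting $k$ be the distance from the right edge of the bounding box to the rightmost column of $V$, I would apply $\langle \rightpush^k \leftpush^{k-1}\rangle$ and check row by row in $H$ that the tokens return to their original positions: the $\rightpush^k$ pushes shove the $H$-tokens rightward until they are blocked against $V$, and the $\leftpush^{k-1}$ pushes then bring them back by one column less, landing them exactly where the original $\leftpush$ took them from.

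The potentially subtle point, and the only thing that needs to be said beyond quoting the observation, is the word ``labeled'' in the statement. Here the key is that the reversal sequence only moves tokens in rows of $H$, and inside each such row the entire contiguous interval of tokens slides rigidly back and forth. Thus each individual labeled token returns to its own original cell, not merely to a cell previously occupied by some token of its own label. This is what upgrades the unlabeled reversibility proven in the observation to the labeled reversibility asserted in Corollary~\ref{cor:reversible}.

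I do not anticipate a real obstacle; the main ``care'' needed is the bookkeeping that the explicit sequence $\langle \rightpush^k \leftpush^{k-1}\rangle$ indeed restores positions cell-for-cell and not merely shape-for-shape, which is immediate once one notes that the motion within each row of $H$ is a rigid translation.
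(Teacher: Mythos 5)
Your proposal is correct and follows essentially the same route as the paper: the paper gives no separate proof of Corollary~\ref{cor:reversible}, stating only that it is immediately implied by the proof of Observation~\ref{obs:compact-shape}, whose explicit reversal sequence $\langle\, \rightpush^k\ \leftpush^{k-1}\,\rangle$ and remark that the argument works in the labeled case are exactly what you extract. Your added observation that the tokens in each row of $H$ translate rigidly, so each labeled token returns to its own cell, is a correct and slightly more explicit justification of the paper's one-line remark.
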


Note that, by Observation~\ref{obs:compact-shape}, compact configurations are orthogonally convex polyominoes; also, all compact configurations are incompressible.
Moreover, a random sequence of pushes results in a compact configuration with high probability (this certainly happens, for instance, if the sequence of pushes has a subsequence of the form $\langle\, \downpush^{k}\ \leftpush^{k}\, \rangle$ for a large-enough $k$).

The number of tokens in a row or a column of a compact configuration is said to be the \emph{length} of that row or column.
Two compact configurations are \emph{compatible} if they have the same number of rows (resp., columns) of each given length. For example, in Figure~\ref{fig:canonical}, the central and right configurations are compatible.
The observation below shows that the set of compatible compact configurations is closed under pushes. 

\begin{observation}
	\label{obs:compact}
	A push applied to a compact configuration $C$ results in a compatible compact configuration $C'$.
\end{observation}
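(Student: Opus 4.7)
The plan is to decompose the claim into two parts: (i) $C'$ is compact, and (ii) $C$ and $C'$ agree in their multisets of row lengths and of column lengths (which is exactly compatibility). Part (i) follows almost immediately from Observation~\ref{obs:compact-shape}: the property $\mathcal P$ is preserved by pushes (as already remarked in that proof), so $C'$ satisfies $\mathcal P$ and is therefore compact.

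For the row-length multiset, I would assume without loss of generality that the push is $\leftpush$ and that the bounding box of $C$ has lower-left corner at the origin. Since $C$ is compact, every row of $C$ consists of a single contiguous block of tokens. A direct check of the push rule shows that a row whose leftmost token lies in column $0$ is unaffected, while every other row has its entire block shifted left by one unit; in either case the number of tokens per row is preserved. Hence the multiset of row lengths is invariant.

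The core step is the column-length multiset, which I would handle via the following structural lemma: \emph{in any compact configuration, the multiset of column lengths is a function of the multiset of row lengths}. Granted this, the column-length multisets of $C$ and $C'$ agree and (ii) is finished. To prove the lemma, let $\ell_1 < \ell_2 < \cdots < \ell_k$ be the distinct row lengths, with multiplicities $m_1, \ldots, m_k$. Property $\mathcal P$ forces any two rows of the same length to share a common column interval $I_t$; applied across different lengths, it produces the strict nesting $I_1 \subsetneq I_2 \subsetneq \cdots \subsetneq I_k$. Partitioning the widest row $I_k$ into annuli $A_t := I_t \setminus I_{t-1}$ (with $I_0 := \emptyset$), every column in $A_t$ is covered by exactly the rows of length at least $\ell_t$, so has length $M_t := m_t + m_{t+1} + \cdots + m_k$, while $|A_t| = \ell_t - \ell_{t-1}$. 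Thus the multiset of column lengths contains the value $M_t$ with multiplicity $\ell_t - \ell_{t-1}$ for each $t$, a function only of the tuple $(\ell_t, m_t)_t$.

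The only real work is in the structural lemma; the other steps are bookkeeping. The crucial insight is the rigidity coming from $\mathcal P$: in a compact configuration the row-length multiset already determines the column-length multiset, a property that would fail for arbitrary orthogonally convex polyominoes whose same-length intervals may differ.
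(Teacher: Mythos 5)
Your proof is correct, and its key step takes a genuinely different route from the paper's. The two easy parts coincide in substance: the paper gets compactness of $C'$ straight from the definition ($C'$ is still reachable from a canonical configuration via $C$) rather than via preservation of $\mathcal P$, and it likewise observes that a horizontal push moves tokens only within their rows, so row lengths are untouched. The difference is in the column lengths. The paper argues positionally: reusing the sets $H$ and $V$ from the proof of Observation~\ref{obs:compact-shape}, it views the push as deleting the leftmost column, of length $|H|$, and inserting a fresh column of the same length $|H|$ immediately to the right of $V$, so the column-length multiset is visibly unchanged. You instead prove a rigidity lemma: in any compact configuration the nested row intervals $I_1\subsetneq\cdots\subsetneq I_k$ force the column-length multiset to consist of the value $M_t=m_t+\cdots+m_k$ with multiplicity $\ell_t-\ell_{t-1}$, hence to be a function of the row-length multiset alone; column invariance then follows from row invariance plus compactness of $C'$. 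Your lemma and its proof are sound, and they yield a bonus the paper does not state: for compact configurations, compatibility is equivalent to agreement of the row-length multisets, so half of the definition of compatibility is redundant. What the paper's more hands-on computation buys is finer information about \emph{which} column goes \emph{where} under a push; that positional tracking is exactly what gets reused later (e.g.\ in Lemma~\ref{lem:all-even}, where one needs same-length columns to remain aligned and each push to act as a cyclic permutation on specific rows or columns), so the direct argument is not wasted effort in the context of the paper.
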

\begin{proof}
Since $C$ can be obtained from a canonical configuration by pushes, then so can $C'$, and therefore $C'$ is compact, as well.
	It is enough to show that a $\leftpush$ push in $C$ does not change the number of rows (resp., columns) of each given length. Note that labels are irrelevant, and so we will assume $C$ and $C'$ to be unlabeled, without loss of generality.
	We reuse the notation of the proof of Observation~\ref{obs:compact-shape}.
	The $\leftpush$ push only moves tokens in $H$, so it is clear that the lengths of rows do not change.
	We can see the push as removing the leftmost column (which has length $|H|$), moving all columns to the right of $V$ by one unit to the right, and creating a new column of length $|H|$ immediately to the right of $V$.
	Thus, the number of columns of each length remains the same, and $C'$ is compatible with $C$.
\end{proof}

Note that compatibility is an equivalence relation on the set of compact configurations. 
By Observation~\ref{obs:compact}, in any equivalence class of unlabeled compatible compact configurations there is exactly one canonical configuration. Once we reach a compact configuration via a sequence of pushes, no other canonical configuration can be reached, except for the one corresponding to the current compact configuration. 

The following observation follows from the fact that we can obtain a canonical configuration by applying $\downpush$ and $\leftpush$ pushes and by Corollary~\ref{cor:reversible}.

\begin{observation}
	\label{obs:canonical-reach}
Any two unlabeled compatible compact configurations can be obtained from each other by pushes.\qed
\end{observation}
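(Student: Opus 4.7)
The plan is to route both configurations through their common canonical representative and exploit reversibility. Let $C_1$ and $C_2$ be the two unlabeled compatible compact configurations we want to interconvert.

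First, I would apply the alternating $\downpush$, $\leftpush$ procedure mentioned in the definition of ``canonical'' to each of $C_1$ and $C_2$ separately, obtaining canonical configurations $K_1$ and $K_2$ respectively. Since compactness is preserved by pushes (by the definition of compact configuration, or alternatively by Observation~\ref{obs:compact-shape}) and since compatibility is preserved by pushes (by Observation~\ref{obs:compact}), the canonical configuration $K_i$ is compact, and is compatible with $C_i$ for $i\in\{1,2\}$. Because compatibility is an equivalence relation and $C_1$ is compatible with $C_2$ by hypothesis, it follows that $K_1$ is compatible with $K_2$.

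Next, I would appeal to the fact noted just after Observation~\ref{obs:compact}: in each equivalence class of compatible unlabeled compact configurations there is exactly one canonical configuration. Since $K_1$ and $K_2$ are both canonical and both lie in the same compatibility class, they must coincide as unlabeled configurations, i.e.\ $K_1 = K_2$.

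Finally, I would invoke Corollary~\ref{cor:reversible}: every push applied to a compact configuration is reversible. The sequence $\sigma_2$ of $\downpush$ and $\leftpush$ pushes that takes $C_2$ to $K_2$ passes only through compact configurations, so each of its moves is reversible; let $\sigma_2^{-1}$ be the reversed sequence taking $K_2$ back to $C_2$. Concatenating the sequence $\sigma_1$ from $C_1$ to $K_1$ with $\sigma_2^{-1}$ yields a sequence of pushes transforming $C_1$ into $C_2$, as desired. The only mild subtlety is verifying that the uniqueness of the canonical representative per compatibility class really follows from Observation~\ref{obs:compact} together with the shape characterization, but this is immediate because a canonical configuration is uniquely determined by the multiset of its row and column lengths (left-justified and bottom-justified, with row lengths sorted so that the configuration is orthogonally convex).
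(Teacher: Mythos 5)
Your proposal is correct and follows essentially the same route as the paper, which derives the observation from the fact that $\downpush$ and $\leftpush$ pushes lead to the (unique) canonical configuration of the compatibility class, combined with the reversibility of pushes on compact configurations (Corollary~\ref{cor:reversible}). You merely spell out in more detail the uniqueness of the canonical representative, which the paper records in the remark following Observation~\ref{obs:compact}.
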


Two labeled configurations are said to have the \emph{same shape} if they have the same set of full positions (recall that we are identifying configurations that only differ by a translation).
We can now define our Permutation Puzzle.

\begin{problem}[Permutation Puzzle]\label{prob:permutation}
	Given two same-shaped labeled compact configurations $C$ and $C'$, is there a sequence of pushes that transforms $C$ into $C'$?
\end{problem}

Two same-shaped labeled configurations differ by a permutation of their tokens.
If we obtain a same-shaped configuration $C'$ from $C$, the sequence of pushes results in such a permutation.
Note that the set of possible permutations is \emph{finite} and closed under composition, and therefore is a \emph{permutation group}, which we denote as $G_C$. 

The following observation allows us to focus on sequences between canonical configurations.
We say that the labeled canonical configuration $C$ obtained from a compact configuration $C'$ by $\langle\, \downpush^{k_1}\ \leftpush^{k_2}\, \rangle$, for some $k_1,k_2$,  is the \emph{canonical form} of $C'$.
Then the following is a direct consequence of Corollary~\ref{cor:reversible}.

\begin{observation}
	\label{obs:canonical-sufficient}
	A labeled compact configuration $C$ can be obtained from a same-shaped labeled compact configuration $C'$ if and only if the canonical form of $C$ is reachable from the canonical form of $C'$.\qed
\end{observation}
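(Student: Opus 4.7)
The plan is to leverage Corollary~\ref{cor:reversible} to construct ``detours'' through canonical forms. Both directions of the equivalence follow the same template, since the canonical form of any compact configuration $D$ is obtained from $D$ by a fixed sequence of $\downpush$ and $\leftpush$ pushes, all of which are reversible.

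For the forward direction, I would suppose $C$ is reachable from $C'$ via a push sequence $\sigma$. Let $\alpha$ denote the sequence $\langle\,\downpush^{k_1}\,\leftpush^{k_2}\,\rangle$ taking $C'$ to its canonical form, and let $\beta$ denote the corresponding sequence taking $C$ to its canonical form. By Corollary~\ref{cor:reversible}, each push in $\alpha$ admits an inverse, so there is a valid sequence $\alpha^{-1}$ from the canonical form of $C'$ back to $C'$. The concatenation $\alpha^{-1}\,\sigma\,\beta$ then transforms the canonical form of $C'$ into the canonical form of $C$. For the converse, given a sequence $\gamma$ from the canonical form of $C'$ to the canonical form of $C$, the concatenation $\alpha\,\gamma\,\beta^{-1}$ takes $C'$ directly to $C$.

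The only subtlety, and the closest thing to a main obstacle here, is to verify that every push in each constructed sequence is applied to a compact configuration, so that Corollary~\ref{cor:reversible} genuinely applies to each reversed segment. This follows from Observation~\ref{obs:compact}: pushes preserve compactness, and since $C$, $C'$, and their canonical forms are all compact, every intermediate configuration along $\alpha$, $\beta$, $\sigma$, and $\gamma$ is compact as well. Hence every reversed push is legitimate, and the two concatenations described above are valid push sequences, completing both directions of the equivalence.
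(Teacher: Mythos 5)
Your proposal is correct and matches the paper's intent exactly: the paper declares this observation to be a direct consequence of Corollary~\ref{cor:reversible}, and your argument is precisely the expected expansion of that remark, with the detour sequences $\alpha^{-1}\,\sigma\,\beta$ and $\alpha\,\gamma\,\beta^{-1}$ justified by the reversibility of pushes on compact configurations. Your added check via Observation~\ref{obs:compact} that all intermediate configurations remain compact is the right point to verify and is handled correctly.
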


\section{Compaction Puzzles}\label{s:3}
In this section, we focus on Compaction Puzzles (Problem~\ref{prob:compaction}) where the input configuration is \emph{sparse}; that is, where no row or column has more than one token. 
We assume that the number of tokens is $n=ab$, and we wonder if there is a sequence of pushes that produces an $a\times b$ box.
Note that, as long as the configuration is sparse, any push shrinks the bounding box, and therefore is irreversible (unless $n=1$).
Recall that without the sparseness assumption, the problem is NP-complete~\cite{trash}.

One may wonder if the leeway given by sparse configurations is enough to obtain every compact configuration; in this section, we will show that this is not the case.

We begin by observing that, for every $n$, there exists a ``universal'' configuration that can be reconfigured into any compact configuration with $n$ tokens (such as an $a\times b$ box).

\begin{observation}
	\label{obs:universal-conf}
	There exists a sparse unlabeled configuration with $n$ tokens that can be reconfigured into any compact configuration with $n$ tokens.
\end{observation}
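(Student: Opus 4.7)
I would take the main diagonal $D_n := \{(i, i) : 0 \le i < n\}$ as the universal sparse configuration. By Observation~\ref{obs:canonical-reach}, it suffices to show that $D_n$ can be reconfigured into the canonical form $C^*_\lambda$ of every partition $\lambda = (r_1, r_2, \dots, r_k)$ of $n$ (with $r_1 \ge r_2 \ge \dots \ge r_k$), since any compact configuration is then reachable through the canonical form of its compatibility class.

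The plan is to build $C^*_\lambda$ one row at a time, from the bottom up, via the push sequence
\[
\big\langle\, \downpush^{r_1 - 1}\ \leftpush^{r_1}\ \downpush^{r_2 - 1}\ \leftpush^{r_2}\ \cdots\ \downpush^{r_k - 1}\ \leftpush^{r_k}\, \big\rangle.
\]
The invariant I would maintain, by induction on $i$, is that after the first $i$ blocks $\downpush^{r_j - 1}\,\leftpush^{r_j}$ the bottom $i$ rows of $C^*_\lambda$ are already in place (the token at $(c, j)$ is present for every $0 \le c < r_{j+1}$ and $0 \le j < i$), while the remaining $n - (r_1 + \dots + r_i)$ tokens form a ``shifted diagonal'' $\{(t, i+t)\}$ whose lowest token sits at $(0, i)$.

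For the inductive step, $\downpush^{r_{i+1} - 1}$ drops exactly $r_{i+1} - 1$ tokens of the current sub-diagonal into row $i$, joining the token already anchored at $(0, i)$; the Young-diagram inequality $r_i \ge r_{i+1}$ is precisely what guarantees that each new row-$i$ token, landing at some column $c < r_{i+1} \le r_i$, is supported from below by the packed row $i-1$ and hence does not keep falling. Then $\leftpush^{r_{i+1}}$ shifts the remaining upper sub-diagonal left by $r_{i+1}$ positions while leaving the already-packed rows undisturbed (their tokens are jammed against the left wall and against each other), placing the new anchor at $(0, i+1)$. The main obstacle I anticipate is the careful bookkeeping: verifying at every step that pushes act only on the intended part of the configuration, and that the Young-diagram monotonicity is used precisely where needed to prevent tokens from overshooting into earlier-completed rows. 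Degenerate cases such as $r_i = 1$ (where $\downpush^{0}$ is a no-op) or the last iteration (where the sub-diagonal has been exhausted and $\leftpush^{r_k}$ acts trivially) fit into this framework without modification.
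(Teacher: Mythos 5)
Your proposal is correct and takes essentially the same approach as the paper's proof: the same diagonal starting configuration, the same reduction to canonical configurations via Observation~\ref{obs:canonical-reach}, and the same greedy construction by alternating batches of vertical and horizontal pushes, the only (immaterial) difference being that you assemble the target shape row by row where the paper assembles it column by column. Your inductive invariant and the role of the monotonicity $r_i \ge r_{i+1}$ are in fact spelled out more explicitly than in the paper's rather terse argument.
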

\begin{proof}
	Such a configuration is the secondary diagonal of an $n\times n$ matrix, i.e., all positions $(i,i)$ are full, for $i\in\{1,\ldots,n\}$, and all other positions are empty. Due to Observation~\ref{obs:canonical-reach}, it suffices to show that any canonical configuration can be formed.
	
	By definition, a canonical configuration has a monotonic decreasing sequence of column lengths.
	After $k$ $\leftpush$ pushes, the configuration will have a column of length $k$.
	We then perform $k$ $\downpush$ pushes bringing the column up aligning its bottom with the next token. 
	Any subsequent $k'\le k$ $\leftpush$ pushes would accumulate tokens in the bottommost contiguous positions of the second leftmost column. 
	We can then repeat this procedure to produce any monotonic decreasing sequence of column lengths.
\end{proof}

We will now show that not all configurations can be reconfigured into an $a\times b$ box.

\begin{lemma}
\label{lem:not-reconfigurable}
For all $a\geq 4$ and $b\geq 3$, there exists a sparse configuration that cannot be reconfigured into an $a\times b$ box.
\end{lemma}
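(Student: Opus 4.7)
The plan is to exhibit an explicit sparse configuration $C^{\star}$ of $n = ab$ tokens that cannot be reconfigured into an $a \times b$ box. Unlike the ``universal'' configuration of Observation~\ref{obs:universal-conf}, the candidate $C^{\star}$ will be placed so that its natural spread in both directions obstructs the simultaneous formation of a length-$a$ row and a length-$b$ column. A natural candidate is a stretched or shuffled staircase, for example tokens at $(i, \pi(i))$ for $i = 0, 1, \ldots, n-1$, where $\pi$ is a non-trivial permutation of $\{0, 1, \ldots, n-1\}$ chosen to create adversarial gaps (e.g.\ alternating between low and high rows), together with scaled column coordinates so that a push in any direction quickly funnels tokens into a very thin shape.

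The central invariant to exploit is Observation~\ref{obs:compact}: once a compact configuration is reached via pushes, the multiset of row lengths and of column lengths is invariant under all further pushes. Hence, for the $a \times b$ box to be reached from $C^{\star}$, the \emph{first} compact configuration produced along some push sequence must already be compatible with the box, meaning it must have $b$ rows each of length $a$ and $a$ columns each of length $b$. It therefore suffices to prove that starting from $C^{\star}$, every push sequence produces a first compact configuration whose length multisets are not $\{a^b, b^a\}$.

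The key step is analyzing how $C^{\star}$ evolves under pushes until it first becomes compact. As long as the configuration remains sparse, each push either preserves sparseness while shrinking the bounding box by one, or triggers a first ``collision'' in which two tokens land in the same row or column. I would track this evolution through a structured case analysis (or a suitable potential function capturing ``thinness'') and argue that at the moment $C^{\star}$ first becomes compact, the accumulated merges have forced an $L$-shape or staircase whose longest row or longest column is strictly less than $\max(a,b)$, in a way incompatible with $\{a^b, b^a\}$. The main obstacle is the sheer number of push sequences to consider and the need to handle all $a \geq 4, b \geq 3$ uniformly; I expect to deal with this by reducing each case to the behavior of the two leftmost (resp.\ bottommost) tokens at the first merge and showing that either merge forces the remainder of the configuration into a shape too thin in one of the two dimensions. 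Once the capacity bound is established, incompatibility with the $a \times b$ box is immediate and the lemma follows.
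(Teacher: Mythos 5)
There is a genuine gap: your proposal never actually exhibits a configuration or an obstruction, so the argument does not get off the ground. You describe the candidate only as tokens at $(i,\pi(i))$ for a permutation $\pi$ ``chosen to create adversarial gaps,'' but the choice of $\pi$ is the entire content of the lemma --- for instance, the identity (or anti-identity) permutation gives precisely the \emph{universal} configuration of Observation~\ref{obs:universal-conf}, which can be pushed into every compact shape. Likewise, the concluding step is deferred to ``a structured case analysis (or a suitable potential function),'' which is exactly the part that needs to be supplied. The framework you set up (via Observation~\ref{obs:compact}, the first compact configuration reached must already have row-length multiset $\{a^b\}$ and column-length multiset $\{b^a\}$) is sound, but no incompatibility is ever derived from it.

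Moreover, the obstruction you gesture at points in the wrong direction. You predict that the merges force a shape whose longest row or column is \emph{strictly less} than $\max(a,b)$, i.e., that the configuration becomes too thin. The actual obstruction is the opposite: pushes accumulate tokens onto lines, and the relevant fatal event is a row acquiring \emph{more} than $a$ tokens or a column acquiring \emph{more} than $b$ (cf.\ Observation~3 of~\cite{trash}). The paper's construction makes this quantitative by splitting the $n=ab$ tokens into two anti-diagonal groups of sizes $\lfloor ab/2\rfloor$ and $\lceil ab/2\rceil$ placed in opposite quadrants, so that each push direction acts on only one group; once either group has absorbed enough pushes it collapses into an L-shape carrying $n_i+1\geq\lfloor ab/2\rfloor+1$ tokens on the union of one row and one column. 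Forming the box then requires $a+b\geq\lfloor ab/2\rfloor+1$, i.e., $(a-2)(b-2)\leq 3$, which fails for all $a\geq 4$, $b\geq 3$ except two small cases that are dispatched separately. That counting step --- comparing the forced row-plus-column load $\lfloor ab/2\rfloor+1$ against the capacity $a+b$ of the target box --- is the key idea your proposal is missing, and without it (or a concrete substitute) the lemma is not proved.
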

\begin{proof}
We will describe a sparse configuration $C_{a,b}$ and argue that it cannot be reconfigured into an $a\times b$ box; refer to Figure~\ref{fig:counterexample}.
The configuration $C_{a,b}$ is subdivided into four quadrants;
the lower-left and upper-right quadrants are empty, and all tokens are in the other two quadrants.
Specifically, we place $n_1=\left\lfloor\frac{ab}{2}\right\rfloor$ tokens in the upper-left quadrant and $n_2=\left\lceil\frac{ab}{2}\right\rceil$ in the lower-right quadrant; each of those quadrants is a square matrix with only its secondary diagonal full. 

\begin {figure}
  \centering
  \includegraphics [scale=1] {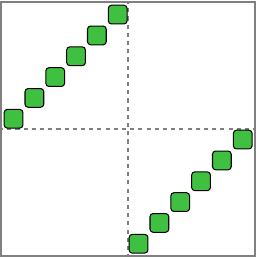}\qquad
  \includegraphics [scale=1] {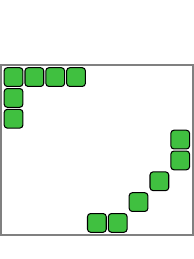}\qquad
  \includegraphics [scale=1] {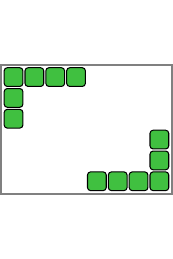}\qquad
  \includegraphics [scale=1] {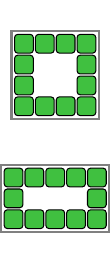}
  \caption {The configuration $C_{a,b}$ defined in Lemma~\ref{lem:not-reconfigurable}, with $a=4$ and $b=3$ (left). Any sequence of pushes produces an ``L-shaped'' pattern (center), which eventually causes the formation of rows with more than $a$ tokens or columns with more than $b$ tokens (right).}
  \label {fig:counterexample}
\end {figure}

Initially, all the $\leftpush$ and $\uppush$ pushes (resp., $\rightpush$ and $\downpush$ pushes) only affect tokens in the upper-left (resp., lower-right) quadrant. Thus, as soon as either $n_1-1$ pushes have been made in the upper-left quadrant or $n_2-1$ pushes have been made in the lower-right quadrant (whichever comes sooner), all the tokens in that quadrant must be located on the union between a single row and a single column, forming a connected ``L-shaped'' pattern.

If the L-shaped pattern contains $x$ tokens on the same row and $y$ tokens on the same column, we must have $x+y=n_i+1$ for some $i\in\{1,2\}$. Thus, $x+y\geq \left\lfloor\frac{ab}{2}\right\rfloor+1$.
It is easy to see (cf.~\cite[Observation~3]{trash}) that it is impossible to form an $a\times b$ box from a configuration where more than $a$ (resp., $b$) tokens are on the same row (resp., column). Therefore, we must have $x\leq a$ and $y\leq b$, which implies $a+b\geq \left\lfloor\frac{ab}{2}\right\rfloor+1 \geq \frac{ab+1}{2}$. By rearranging terms we have $(a-2)(b-2)\leq 3$, whose only solutions with $a\geq 4$ and $b\geq 3$ are $a=4$, $b=3$ and $a=5$, $b=3$.

Let $a=4$ and $b=3$ (resp., $a=5$ and $b=3$), and let $x$ and $y$ be defined as above. Since $x+y\geq 7$ (resp., $x+y\geq 8$), we must have $x=a$ and $y=b$. Now, further pushes in the same quadrant cause the L-shaped pattern to rigidly move toward the opposite quadrant, eventually creating a row with more than $a$ tokens or a column with more than $b$ tokens. On the other hand, making more pushes in the opposite quadrant ends up forming another, oppositely oriented, L-shaped pattern identical to the first. At this point, any push causes the two L-shaped patterns to rigidly approach each other, eventually creating a row or a column with too many tokens.
\end{proof}

In all other cases, any sparse configuration can be reconfigured into an $a\times b$ box, which yields the following theorem.

\begin{theorem}\label{thm:compaction}
	All sparse configurations of $n=ab$ tokens can be pushed into an $a\times b$ box if and only if $a\leq 2$ or $b\leq 2$ or $a=b=3$.
\end{theorem}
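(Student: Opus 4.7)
The ``only if'' direction follows immediately from Lemma~\ref{lem:not-reconfigurable} together with the $90^\circ$ rotational symmetry of the push model: if $a, b \geq 3$ and $(a,b) \neq (3,3)$, then after swapping $a$ and $b$ if necessary we may assume $a \geq 4$ and $b \geq 3$, in which case the construction $C_{a,b}$ of the lemma is a sparse configuration that cannot be compacted into an $a \times b$ box. For the ``if'' direction, by the same rotational symmetry it suffices to treat the three cases $a = 1$, $a = 2$, and $a = b = 3$, and for each I plan to exhibit an explicit push strategy that works uniformly for every sparse input.

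\paragraph{The case $a = 1$.} The key observation is that a $\leftpush$ applied to a sparse configuration moves every non-leftmost token exactly one unit to the left: since each row contains a single token, the cell immediately to its left is empty and nothing ever blocks the move. Consequently, each $\leftpush$ shrinks the width of the bounding box by exactly one, and after at most $n - 1$ such pushes the configuration is a single column of $n$ tokens, i.e.~a $1 \times n$ box up to rotation.

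\paragraph{The cases $a = 2$ and $a = b = 3$.} For $a = 2$, I would build the $2 \times b$ target one column at a time. The idea is to apply a single $\leftpush$ to the current tail, which creates at least one column of length $\geq 2$ in leftmost position; then use $\downpush$'s to slide a pair of tokens to the bottom and, if column $0$ overshot length $2$, use $\rightpush$'s to export the excess back to the right, leaving a sparse configuration of $2(b-1)$ tokens on which the procedure recurses. For $a = b = 3$, since there are only $9!$ sparse configurations up to translation, a bounded case analysis becomes feasible; I would classify these configurations according to the shape produced after the first couple of $\leftpush$ iterates, and in each class supply a fixed push recipe that assembles a $3 \times 3$ box, using the same ``grow one column at a time, then exhaust'' template as in the $a=2$ case.

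\paragraph{Main obstacle.} The delicate point, common to both the $a = 2$ and the $a = b = 3$ cases, is controlling the length of the leftmost column during the $\leftpush$ phase: a naive ``push left until stuck'' strategy can easily create a column of length greater than $a$, which, by \cite[Observation~3]{trash}, is an insurmountable obstruction to forming an $a \times b$ box. The main technical work is therefore to interleave $\downpush$ and $\rightpush$ operations at precisely the right moments so as to export excess tokens back to the right and re-sparsify the tail before each subsequent $\leftpush$, ensuring that the leftmost column grows by exactly the right amount on each iteration and that no row ever exceeds length $a$.
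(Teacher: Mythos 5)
Your ``only if'' direction and your $a=1$ case are fine and essentially match the paper. The problem is that the two substantive cases, $a=2$ and $a=b=3$, are left as plans with an explicitly acknowledged unresolved obstacle rather than proofs, and the plans as stated do not appear workable. For $a=2$, the ``build one column, then recurse on a sparse tail'' template ignores that pushes are global: once two tokens sit in the leftmost column, the $\downpush$ pushes you use to stack them also act on every token of the tail, and since the tail tokens occupy pairwise distinct columns they fall unobstructed toward the bottom row, destroying the sparseness on which your recursion depends; similarly, a $\rightpush$ does not selectively ``export the excess'' from an overfull column --- it moves every unblocked token --- and you give no mechanism that ``re-sparsifies'' a tail once its tokens have merged into common rows. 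For $a=b=3$, the claim that there are only $9!$ sparse configurations up to translation is false: sparseness constrains the occupancy pattern but not the gaps between occupied rows and columns, so there are infinitely many such configurations, and the gaps genuinely affect the dynamics (they determine which tokens merge first under repeated $\leftpush$). Your proposed bounded case analysis therefore does not get off the ground without a normalization argument you do not supply.

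The paper closes both cases with short global strategies built on one observation you are missing: in a sparse configuration, each push adds \emph{at most one} token to the extremal row or column in the direction of the push, so one can stop exactly when that row or column reaches the desired length, and the configuration strictly beyond it remains sparse. For $a=2$, this lets one apply $\uppush$ pushes until exactly half the tokens lie in the top row, then $\downpush$ pushes until the remaining sparse half collapses into the single row immediately below it, then $\leftpush$ pushes to compact --- no recursion, no exporting of excess. For $a=b=3$, the same observation is applied first to the leftmost and then to the rightmost column to collect $3+3$ tokens; the three remaining tokens lie strictly between the two full columns, one per row (no vertical push has yet occurred), and two vertical phases centered on the middle token followed by $\leftpush$ pushes finish the box. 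If you want to salvage your outline, this monotone control of the extremal row/column is the single missing idea; with it, neither the recursion nor the case analysis is needed.
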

\begin{proof}
	Lemma~\ref{lem:not-reconfigurable} provides counterexamples for $a\geq 4$ and $b\geq 3$ (and, symmetrically, for $a\geq 3$ and $b\geq 4$); it remains to show the positive cases.
	
	First, note that the claim for $a=1$ is trivial (and $b=1$ is symmetric): Perform $\downpush$ pushes until all tokens are in the same row, then perform $\leftpush$ pushes until they occupy consecutive positions. 
	
	Now assume $a=2$ ($b=2$ is symmetric).
	Perform $\uppush$ pushes until half of the tokens are in the same row $r$. Then, the other half form a sparse subconfiguration below $r$. Perform $\downpush$ pushes until all such tokens are in the row immediately below $r$.
	Then, a $2\times \frac{n}{2}$ box is obtainable by performing $\leftpush$ pushes.

\begin {figure}
  \centering
  \includegraphics [scale=1] {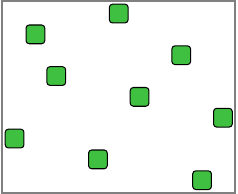}\qquad
  \includegraphics [scale=1] {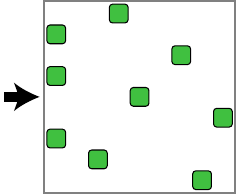}\qquad
  \includegraphics [scale=1] {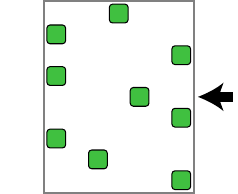}\\
  \vspace{0.5cm}
  \includegraphics [scale=1] {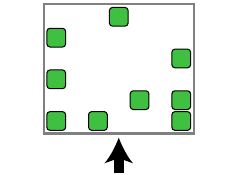}\qquad
  \includegraphics [scale=1] {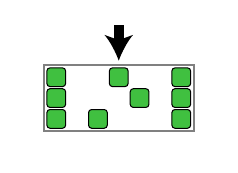}\qquad
  \includegraphics [scale=1] {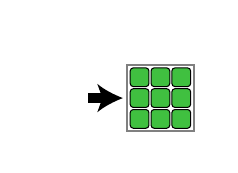}
  \caption {Pushing a sparse configuration of $n=9$ tokens into a $3\times 3$ box (Theorem~\ref{thm:compaction}).}
  \label {fig:3x3}
\end {figure}

	Finally, assume that $a=b=3$; refer to Figure~\ref{fig:3x3}.
	Because the initial configuration is sparse, if we perform $\leftpush$ pushes, the number of tokens in the leftmost column increases by at most one with every push.
	Apply $\leftpush$ pushes until there are three tokens in the leftmost column.
	The subconfiguration obtained by deleting the leftmost column is also sparse.
	Then, by the same argument we can perform $\rightpush$ pushes until there are three tokens in the rightmost column.
	Since we have not yet performed any vertical push, every row has at most one token. 
	There are exactly three tokens that are not in the leftmost or rightmost columns.
	Let $t_1$, $t_2$ and $t_3$, respectively, be those tokens, from smallest to largest $y$-coordinate.
	Perform $\downpush$ pushes until the bottommost row of the configuration is one unit below $t_2$. 
	Note that $t_1$ must be in the bottommost row.
	Symmetrically, apply $\uppush$ pushes until the topmost row of the configuration is one unit above $t_2$. 
	Now, $t_1$, $t_2$ and $t_3$ are strictly between the leftmost and rightmost columns, and are one in each row.
	Then, we obtain a $3\times 3$ box by performing $\leftpush$ pushes.
\end{proof}

\section{Permutation Puzzles}\label{s:4}
In this section, we will give a complete solution to the Permutation Puzzle (Problem~\ref{prob:permutation}): For any given starting compact labeled configuration $C$, we will determine the set of labeled configurations of the same shape as $C$ that we can reach by means of pushes. Specifically, we will give a description of the permutation group $G_C$, as defined in Section~\ref{s:2}.

By Observation~\ref {obs:canonical-sufficient}, it is not restrictive to limit our attention to canonical configurations. For most of this section, we will assume that all tokens have distinct labels; the general case will be discussed at the end of the section.

We will show that all feasible permutations in this setting are even (Section~\ref{s:4.1}), and furthermore that all even permutations are feasible, with some simple restrictions (Section~\ref{s:4.2}).

It will be convenient to always consider the lower-left corner of the bounding box to be $(0,0)$, even after a sequence of pushes. That is, we consider the ``gravity'' formulation of the puzzle, where the bounding box remains still and the tokens move within it (recall that compact configurations are uncompressible). As a consequence, a push in one direction will cause the tokens to ``fall'' in the opposite direction within the bounding box.

\subsection{All Feasible Permutations Are Even}\label{s:4.1}

In this section, we will prove Theorem~\ref{thm:even}, which states that only \emph {even} permutations are possible in the Permutation Puzzle. 

For a labeled canonical configuration $C$, let $C'$ be an extension of the labeling where also the empty cells inside the bounding box of $C$ get a unique label.
Our proof strategy is to first extend permutations of $C$ to permutations of $C'$
, and argue that a permutation of full and empty cells must be even.
We then introduce a \emph {dual game} played on the empty cells only, and argue that this dual game has similar properties.
Since the dual of any game is always smaller (in terms of bounding box) than the original, our theorem then follows by induction.

\subsubsection* {Permutations on Full Cells and Empty Cells}

Let $C$ be a labeled canonical configuration; that is, a function $C\colon \mathcal{L} \to \Sigma \cup \{{\rm empty}\}$.
We extend $C$ to another function $C'\colon \mathcal{L} \to \Sigma \cup \Sigma' \cup \{{\rm empty}\}$, where $\Sigma'$ is a second set of unique labels and $C'(x) \in \Sigma'$ if and only if $x$ is in the bounding box of $C$, but not in $C$ (for all other $x$, $C'(x)=C(x)$).

We now define the effect of a push operation on $C'$ (illustrated in Figure~\ref {fig:empty-labels}). We define it for a $\rightpush$ push; the other directions are symmetric. A single $\rightpush$ push affects each row as follows:
\begin {itemize}
  \item For each row in which the rightmost cell is \emph {empty}, we shift all tokens and empty cells one position to the right and place the rightmost empty cell at the left; in other words, we perform a single cyclic permutation on the tokens in the row.
  \item For each row in which the rightmost cell is \emph {full}, nothing changes.
\end {itemize}

\begin {figure}
\centering
  \includegraphics [scale=0.94] {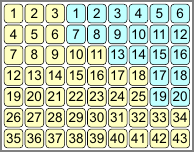}
  \includegraphics [scale=0.94] {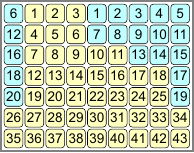}
  \includegraphics [scale=0.94] {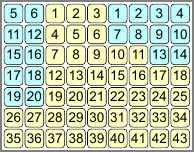}
  \includegraphics [scale=0.94] {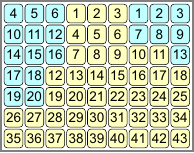}
  \includegraphics [scale=0.94] {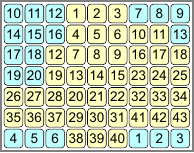}
  \caption {A configuration with labeled empty cells and the result after the sequence of pushes $\langle\, \rightpush\ \rightpush\ \rightpush\ \uppush\, \rangle$. We use yellow for full tokens and light blue for empty cells (and later, dual tokens) in this section.}
  \label {fig:empty-labels}
\end {figure}

We now argue that for any sequence of pushes which transforms $C$ into another canonical configuration, the effect of these moves on $C'$ must be an even permutation.

\begin {lemma} \label {lem:all-even}
Every achievable permutation on the union of full and empty cells must be even.
\end {lemma}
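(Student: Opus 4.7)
The plan is to analyze the sign of each push as a permutation of the extended labels of $C'$, and then to prove that the cumulative sign over any returning sequence equals $+1$ via a parity invariant of the configuration. From the description of $\rightpush$ (and symmetrically for the other three directions), each push acts on $C'$ as a product of disjoint cyclic permutations: one $w$-cycle per row whose rightmost cell is empty for $\rightpush$ and $\leftpush$, and one $h$-cycle per column whose extremal cell is empty for $\uppush$ and $\downpush$. Since a $k$-cycle has sign $(-1)^{k-1}$, a horizontal push that shifts $r$ rows contributes sign $(-1)^{r(w-1)}$, and a vertical push that shifts $c$ columns contributes sign $(-1)^{c(h-1)}$. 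Multiplying over a sequence of pushes then yields total sign $(-1)^{(w-1)R + (h-1)C}$, where $R$ and $C$ are the cumulative numbers of shifted rows and columns across the sequence.

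To conclude that this total always equals $+1$ for sequences returning to the initial canonical configuration, the plan is to exhibit a parity invariant $\Phi$ on compact configurations with extended labeling, valued in $\{\pm 1\}$, such that $\Phi = +1$ on the initial configuration and $\Phi(D') = \Phi(D) \cdot \mathrm{sign}(\mathrm{push})$ for every push $D \to D'$. Since $\Phi$ would depend only on the configuration and not on the particular sequence of pushes used to reach it, any returning sequence must preserve the value of $\Phi$, forcing the product of individual push-signs to equal $+1$ --- which is exactly the lemma's claim.

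The hard part will be constructing $\Phi$ explicitly. A naive candidate, $\Phi(D) = (-1)^{\sum_E (x_E + y_E)}$ over the empty labels, correctly tracks the sign of a push whenever every shifted row or column contains exactly one empty cell (as in the $2 \times 2$ L-shape visible in Figure~\ref{fig:empty-labels}); however, it fails in general because a horizontal push of sign $(-1)^r$ (when $w$ is even) changes such a $\Phi$ by $(-1)^{\sum_j e_j}$, where $e_j$ counts the empties in each shifted row $j$, and the congruence $\sum_j e_j \equiv r \pmod 2$ does not always hold. I would therefore refine $\Phi$ using the contiguity and orthogonal-convexity structure of compact configurations established in Observation~\ref{obs:compact-shape} --- for instance, by singling out an \emph{extremal} empty in each row or column (well-defined by compactness) or by tracking wraparound events via a winding-like counter --- and then verify compatibility across all four push directions.
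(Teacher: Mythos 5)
Your first paragraph is correct and matches the paper's own setup: each push acts on $C'$ as a product of disjoint cycles, one cycle of length $w$ (the width of the bounding box) per shifted row for a horizontal push and one cycle of length $h$ per shifted column for a vertical push, so the total sign is $(-1)^{(w-1)R+(h-1)C}$ and it suffices to control the parities of $R$ and $C$. The gap is in the second half. You correctly diagnose that the naive invariant $(-1)^{\sum_E(x_E+y_E)}$ fails, but you never construct the refined invariant $\Phi$; ``singling out an extremal empty in each row'' and ``a winding-like counter'' are directions, not arguments, and neither is verified against all four push directions. As it stands, the hardest step of your proof is left open.

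The missing idea --- and what the paper actually uses --- is to group the rows by their \emph{length} (number of tokens). By Observations~\ref{obs:compact-shape} and~\ref{obs:compact}, all rows of a given length $k$ remain mutually aligned in every configuration reachable during the process, and a vertical push never changes their horizontal placement; hence every horizontal push either shifts \emph{all} $m_k$ rows of length $k$ by one unit (left or right) or leaves all of them fixed. The common horizontal offset of this length class from the left border of the bounding box is therefore a counter that changes by $\pm 1$ with each such push and equals $0$ at both endpoints (both configurations being canonical), so the number of pushes affecting the length-$k$ class is even, and these pushes contribute an even multiple of $m_k$ many $w$-cycles. Summing over $k$ gives $R$ even, and the symmetric argument on columns gives $C$ even. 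Equivalently, the invariant you were looking for is $\Phi=(-1)^{(w-1)\sum_k m_k o_k\,+\,(h-1)\sum_k n_k p_k}$, where $o_k$ (resp.\ $p_k$) is the horizontal (resp.\ vertical) offset of the rows (resp.\ columns) of length $k$ and $m_k$ (resp.\ $n_k$) their number: it is indexed by length classes rather than by individual empty cells, which is exactly what the alignment property makes work.
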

\begin {proof}
By definition, every horizontal (resp., vertical) push causes a cyclic permutation on some rows (resp., on some columns) involving both labeled tokens and labeled empty cells.
We will argue that the total number of cyclic permutations on the rows (resp., on the columns) caused by these pushes is even. Since all cyclic permutations on rows (resp., on columns) have the same parity, which depends only on the width (resp., height) of the bounding box, this implies that the overall permutation is even.

\begin {figure}[h!]
\centering
  \includegraphics [scale=1] {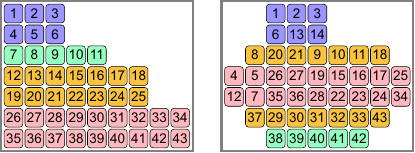}
\qquad
  \includegraphics [scale=1] {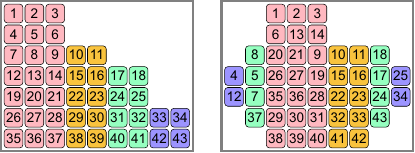}
  \caption {Rows and columns of the same length are always aligned.}
  \label {fig:aligned}
\end {figure}

From Observations~\ref{obs:compact-shape} and~\ref{obs:compact} it follows that all rows (resp., columns) of the same length must always be aligned throughout the reconfiguration (see Figure~\ref {fig:aligned}).
In particular, observe that a \emph {vertical} push never influences the \emph {horizontal} placement of the rows of a particular length (note that here we only argue about the shape, not the labels).
Now consider the set of rows of length $k$. Every horizontal push either moves all such rows one cell to the right, or one cell to the left, or not at all. Since both at the start and at the end of the process all rows of length $k$ are aligned with the left border of the bounding box, the total number of pushes that influence the horizontal placement of these rows is even, and thus the number of cyclic permutations performed on these rows is also even.

Note that a single push may influence the placement of rows of different lengths; however, for each specific length $k$, the total number of pushes that influences them is even, and therefore the total number of cyclic permutations on rows is even.
The same holds for the cyclic permutations on columns, which concludes the proof.
\end {proof}

Now, in order to prove our main theorem, we still need to show that the resulting permutation \emph {restricted to the tokens} is also even; see Figure~\ref {fig:restrict}.

\begin {figure}
\centering
  \includegraphics [scale=1.25] {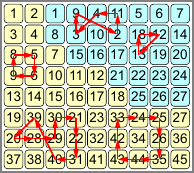}
  \caption {The fact that the overall permutation on full and empty cells is even does not necessarily imply that the permutation on the full cells is even.}
  \label {fig:restrict}
\end {figure}

\subsubsection* {Dual Puzzles}

Given a configuration $C$, we have extended its labeling to a configuration $C'$ having labels on the empty cells, as well.
Now, consider the \emph {restriction} of $C'$ to \emph {only} the empty cells; that is, the function $D \colon \mathcal{L} \to \Sigma'\cup \{{\rm empty}\}$ which labels exactly the cells that are empty in $C$ but lie inside the bounding box of $C$. Clearly, if we can prove that a permutation on $D$ is even, then Lemma~\ref {lem:all-even} implies that the corresponding permutation on $C$ must also be even (the product of two permutations is even if and only if they have the same parity).

\begin {figure}
  \centering
  \includegraphics [scale=1] {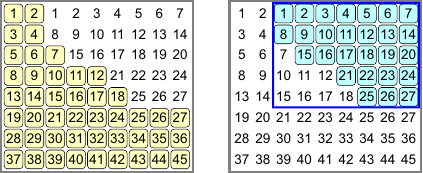}
  \qquad
  \includegraphics [scale=1] {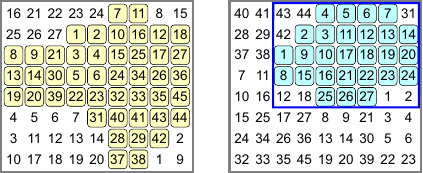}
  \caption {A primal puzzle (yellow) with its corresponding dual puzzle (light blue). The blue rectangle represents the bounding box of the dual puzzle. Note that the tokens in the dual puzzle match the labels on the empty cells of the primal puzzle.}
  \label {fig:dual}
\end {figure}

For convenience, we will consider the bounding box of $C$ as a torus and display it in such a way that all full rows and columns are aligned with the left and bottom of the rectangle (see Figure~\ref {fig:dual}).
We call $D$ a \emph {dual configuration}, and we will study the effects of push moves on $D$, which we will treat as a \emph {dual puzzle} (while the original puzzle on $C$ is the \emph{primal puzzle}).

\begin {figure}
  \centering
  \includegraphics [scale=1] {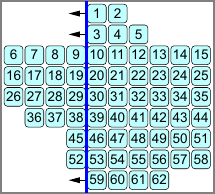}\quad
  \includegraphics [scale=1] {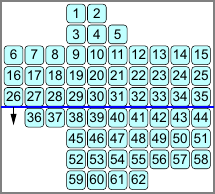}\quad
  \includegraphics [scale=1] {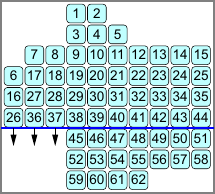}\quad
  \includegraphics [scale=1] {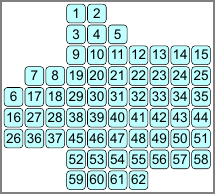}
  \caption {Rules of the dual puzzle. 
  The sequence of pulls $\langle\, \rightpush\ \uppush\ \uppush\, \rangle$ is shown.}
  \label {fig:dual-rules}
\end {figure}

When considering a dual puzzle in isolation, we swap terminology and refer to the empty cells as full, and vice versa.
In this context, a \emph {push} move in the primal puzzle either leaves the dual puzzle unchanged or causes a \emph {pull} move in the dual puzzle. Specifically, if we perform a $\rightpush$ push move in the primal puzzle from a configuration where only the full rows are touching the right side of the bounding box, nothing happens in the dual puzzle. Otherwise, the $\rightpush$ push move in the primal puzzle causes a $\rightpush$ pull move in the dual puzzle, where exactly those rows that are farthest from the left boundary are pulled one unit to the left (refer to Figure~\ref{fig:dual-rules}, where the blue line represents a side of the bounding box of the primal puzzle).

Thus, as we play the primal puzzle, we are also playing the dual puzzle.
We say that a \emph {dual} configuration is \emph {canonical} when all tokens are aligned with the top and right borders of their bounding box; this way, the empty cells in a canonical (primal) configuration form a canonical (dual) configuration.
Most of the results obtained so far for primal puzzles automatically apply to dual puzzles, as well.

In particular, we claim that in the dual game, the equivalent of Lemma~\ref {lem:all-even} still holds.
For this, we now consider again an extension of our dual configuration $D$ which labels both the full and empty cells of the bounding box of $D$.
Crucially, this is \emph {not} the same as the original extension $C'$, because the bounding box of $D$ is smaller than the bounding box of $C$.

\begin {lemma} \label {lem:all-even-dual}
  Let $D$ be a labeled canonical configuration, and let $D'$ be an extension of $D$ that labels also the empty cells of the bounding box of $D'$. Every achievable permutation on $D'$ under a sequence of \emph {dual moves} must be even.
\end {lemma}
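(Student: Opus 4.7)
The plan is to mirror, step by step, the proof of Lemma~\ref{lem:all-even} in the dual setting: the statement of Lemma~\ref{lem:all-even-dual} is formally identical to that of Lemma~\ref{lem:all-even} with ``push'' replaced by ``pull'' and primal objects replaced by dual ones, so the strategy is to verify that every ingredient of the primal argument transfers.

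First, I would observe that the dual configuration $D$ inherits the structural features used in Lemma~\ref{lem:all-even}. Since $C$ is canonical, its full cells form an orthogonally convex polyomino aligned with the bottom-left corner of the bounding box, and a direct inspection of the definition shows that the complement of this polyomino inside the bounding box is itself an orthogonally convex polyomino aligned with the top-right corner. Hence $D$ is canonical in the dual sense, and the analogues of Observations~\ref{obs:compact-shape} and~\ref{obs:compact} go through verbatim for the dual puzzle: any sequence of pulls keeps $D$ compact, and throughout the process all dual rows (resp., columns) of the same length remain aligned with one another.

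Next, I would rerun the counting argument of Lemma~\ref{lem:all-even} on $D'$. By definition, a horizontal pull acts on $D'$ as a product of cyclic permutations, one per row of the dual bounding box that it shifts; all such cyclic permutations on rows have the same parity, determined only by the width $w_D$ of the dual bounding box. Therefore, the parity of the overall permutation on $D'$ depends only on the total number of row-shifts caused by horizontal pulls (and, symmetrically, the total number of column-shifts caused by vertical pulls). Fix a length $k$. By the alignment property above, the dual rows of length $k$ move together under any pull: each horizontal pull either shifts all of them one unit to the right, one unit to the left, or leaves them fixed. At the beginning and at the end of the sequence the dual configuration is canonical, so these rows are aligned with the right boundary at both endpoints, meaning that their net horizontal displacement is zero and the number of pulls that actually shift them is even. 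Summing over all lengths $k$ yields that the total number of row-shifts is even, hence the product of the corresponding cyclic permutations is an even permutation of $D'$; the same argument on columns handles the vertical pulls.

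The main obstacle I anticipate is being scrupulous about the bookkeeping connecting primal pushes and dual pulls. A single primal push may produce no dual pull at all (in which case it contributes the identity to $D'$ and can be ignored), or it may produce a dual pull that simultaneously shifts dual rows of several different lengths; the key point is that, for each fixed length $k$ separately, the net signed displacement of the length-$k$ class over the whole sequence is zero. Once this is carefully justified using the dual analogue of Observation~\ref{obs:compact} — so that the class structure and its alignment are preserved through every intermediate configuration — the parity-of-cyclic-permutation observation finishes the proof exactly as in Lemma~\ref{lem:all-even}.
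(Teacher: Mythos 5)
Your proposal is correct and takes essentially the same approach as the paper: the paper's own proof just observes that the dual puzzle inherits the constant-count and alignment invariants from the primal one and then states that the argument ``proceeds exactly as in Lemma~\ref{lem:all-even}'', which is precisely the per-length parity count you spell out. The only cosmetic difference is that you re-establish these invariants via dual analogues of Observations~\ref{obs:compact-shape} and~\ref{obs:compact}, whereas the paper transfers them directly from the corresponding facts about the primal puzzle.
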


\begin {proof}
Since the number of rows (resp., columns) of any given length in the primal puzzle remains constant, then the same is true in the dual puzzle. Moreover, all of the rows (resp., columns) of the same length must always be aligned in the primal puzzle, and therefore they must be aligned in the dual puzzle, as well. The proof now proceeds exactly as in Lemma~\ref{lem:all-even}.
\end {proof}

\subsubsection* {Induction}

We are now ready to prove that all permutations in $G_C$ must be even.

\begin {figure}
\centering
  \includegraphics [scale=1] {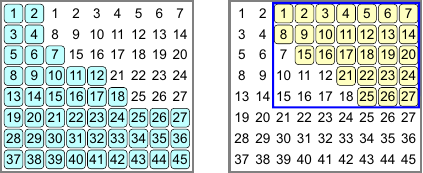}
\qquad
  \includegraphics [scale=1] {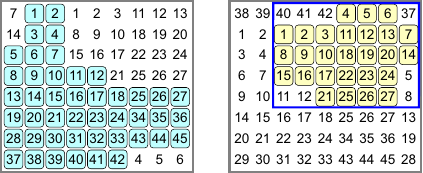}
  \caption {The dual of a dual puzzle is again a primal puzzle; we see the result after the sequence of pulls $\langle\, \leftpush\ \downpush\, \rangle$.}
  \label {fig:dual-dual}
\end {figure}





\begin {theorem} \label {thm:even}
Let $C$ be a labeled canonical configuration, and let $\pi\in G_C$. Then, $\pi$ is an even permutation.
\end {theorem}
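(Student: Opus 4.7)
The plan is to proceed by induction on the area of the bounding box of $C$, making essential use of the primal/dual correspondence developed in this section. The base case is when the bounding box of $C$ is itself completely filled (so $C$ is a solid rectangle): there are no empty cells, the extension $C'$ equals $C$, and $\pi_{C'}=\pi$ is even by Lemma~\ref{lem:all-even}.

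For the inductive step, let $E$ denote the set of empty cells inside the bounding box of $C$. Because $\pi\in G_C$ is realized by a sequence of pushes taking $C$ to a same-shaped canonical configuration, the set of full cells at the beginning coincides with the set of full cells at the end. Hence the extended permutation $\pi_{C'}$ cleanly splits as a product of two permutations on disjoint supports: the original $\pi$ acting on the full cells, and some permutation $\sigma$ acting on $E$. Since signs are multiplicative on disjoint supports,
\[
\operatorname{sgn}(\pi_{C'}) \;=\; \operatorname{sgn}(\pi)\cdot\operatorname{sgn}(\sigma),
\]
and Lemma~\ref{lem:all-even} then forces $\pi$ and $\sigma$ to share the same parity.

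It therefore suffices to prove $\sigma$ is even. The idea is that $\sigma$ is precisely the permutation on the dual tokens induced by the corresponding sequence of dual pulls. After a $180^{\circ}$ rotation, which (as in Figure~\ref{fig:dual-dual}) turns the dual puzzle into a primal-style canonical configuration, $\sigma$ becomes an element of $G_D$ for a smaller canonical configuration $D$. The bounding box of $D$ is strictly smaller than that of $C$: because $C$ is canonical, its leftmost column and bottom row are entirely full, and thus every empty cell lies in a $(W-1)\times(H-1)$ subgrid, where $W\times H$ is the bounding box of $C$. The induction hypothesis therefore applies to $D$ and gives $\operatorname{sgn}(\sigma)=+1$, whence $\operatorname{sgn}(\pi)=+1$.

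The step that merits the most care is the disjoint-support decomposition $\pi_{C'}=\pi\cdot\sigma$: it depends crucially on the fact that the starting and ending configurations occupy exactly the same set of cells inside the bounding box, which is what makes full cells map to full cells and empty cells to empty cells. A secondary point to verify is that dualization strictly shrinks the bounding box area, but this is immediate from canonicality. Once both are in place, the induction closes and the theorem follows.
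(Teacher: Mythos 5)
Your overall strategy---induction on the size of the bounding box, the disjoint-support factorization $\pi_{C'}=\pi\sigma$ with $\operatorname{sgn}(\pi_{C'})=\operatorname{sgn}(\pi)\operatorname{sgn}(\sigma)$ via Lemma~\ref{lem:all-even}, and passing to the dual to control $\sigma$---is the same as the paper's. The factorization step (full cells map to full cells, empty to empty, since the initial and final configurations have the same shape) and the observation that dualization strictly shrinks the bounding box are both correct.

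The gap is the sentence claiming that a $180^{\circ}$ rotation ``turns the dual puzzle into a primal-style canonical configuration,'' so that $\sigma$ lies in $G_D$ for a smaller \emph{primal} puzzle and the induction hypothesis applies to it directly. The dual puzzle is not a rotated primal puzzle: its moves are \emph{pulls}, in which only the rows (resp.\ columns) \emph{farthest} from the relevant boundary move by one unit, whereas in a push \emph{every} row not touching the barrier moves. Figure~\ref{fig:dual-dual} asserts something weaker than what you are using: it is the dual \emph{of the dual} that is again a primal-type puzzle, not the dual itself. Consequently $\sigma$ is not known to be realizable by pushes in any smaller primal configuration, and an induction hypothesis stated only for primal puzzles does not reach it. The repair is exactly what the paper does: strengthen the statement proved by induction so that it covers both primal-type and dual-type puzzles; prove the analogue of Lemma~\ref{lem:all-even} separately for dual moves (this is Lemma~\ref{lem:all-even-dual}, which needs its own justification precisely because the move rule differs); and use the fact that dualizing twice returns to a primal-type puzzle, so the well-founded induction on bounding-box size alternates between the two types. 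With that strengthening your argument closes; as written, the inductive step is unjustified.
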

\begin {proof}
The proof follows from two simple observations. Firstly, if we take the dual of a dual-type puzzle, we obtain another puzzle that again follows the rules of a primal-type puzzle (refer to Figure~\ref {fig:dual-dual}). Secondly, the bounding box of a (primal or dual) puzzle is strictly larger than the bounding box of its dual.

We will prove a stronger statement: that our theorem holds for both primal-type and dual-type puzzles. The proof is by well-founded induction on the size of the bounding box.

If the bounding box of our puzzle is completely full, then moves have no effect, and the only allowed permutation is the identity, which is even.
  
Now, consider a canonical configuration $C$ in a (primal or dual) puzzle with a bounding box which is not completely full. Such a puzzle has a dual, with a canonical configuration $D$ corresponding to $C$. The bounding box of $D$ has smaller size, and therefore the induction hypothesis applies to the dual puzzle. After performing some moves and restoring a canonical configuration in both puzzles, the tokens in $C$ have undergone a permutation $\pi\in G_C$, while the tokens in $D$ have undergone a permutation $\sigma\in G_D$. By Lemmas~\ref {lem:all-even} and~\ref {lem:all-even-dual}, the overall permutation $\pi\sigma$ is even; by the induction hypothesis, $\sigma$ is even; hence, $\pi$ is even, as well.
\end {proof}
\subsection{Generating All Feasible Permutations}\label{s:4.2}

In this section, we will give a complete description of the permutation group $G_C$, which we already know from Section~\ref{s:4.1} to be a subgroup of the \emph{alternating group} $\Alt{n}$.

\subsubsection*{Unmovable Central Core}
Let the bounding box be an $a\times b$ rectangle. Our first observation is that, if more than half of the rows and more than half of the columns of the bounding box are full, then there is a central box of tokens that cannot be moved.
Let $a'$ (resp., $b'$) be the number of full columns (resp., full rows), and let $a''=a-a'$ and $b''=b-b'$. 

\begin{definition}[Core]
If $a'>a''$ and $b'>b''$, the \emph{core} of $C$ is the set of lattice points in the bounding box of $C$ that are in the central $a'-a''$ columns and in the central $b'-b''$ rows.\footnote{Equivalently, if $a>2a''$ and $b>2b''$, the core is obtained by discarding the $a''$ leftmost columns, the $a''$ rightmost columns, the $b''$ topmost rows, and the $b''$ bottommost rows.} If $a'\leq a''$ or $b'\leq b''$, the \emph{core} of $C$ is empty.
\end{definition}

The lattice points in the core are called \emph{core points}, and the tokens in core points are called \emph{core tokens}. Figure~\ref{fig:core} shows an example of a non-empty core.

\begin {figure}[t]
  \centering
  \includegraphics [scale=1.25] {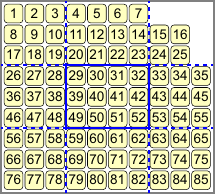}
  \caption {A configuration with $a=10$, $b=9$, $a'=7$, and $b'=6$. The blue rectangle in the center surrounds the core tokens.}
  \label {fig:core}
\end {figure}

\begin{observation}\label{obs:core}
No permutation in $G_C$ moves any core token.
\end{observation}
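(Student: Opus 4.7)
The plan is to show that no push can ever move any core token, by establishing two invariants that together freeze every such token in place.

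First, I would prove that the core columns remain completely full in every reachable configuration (and symmetrically for the core rows). By Observation~\ref{obs:compact}, pushes preserve the multiset of column lengths, so throughout the play there are always exactly $a'$ columns of length $b$, which I will call the \emph{full columns}. By Observation~\ref{obs:compact-shape}, the tokens in each row form a single contiguous interval, so if two columns $c_1 < c_2$ are both full, every column strictly between them must also be full. Consequently the $a'$ full columns always occupy a consecutive block $\{s,s+1,\ldots,s+a'-1\}$ for some $s\in\{0,1,\ldots,a''\}$. Since $a'>a''$, the central $a'-a''$ columns $\{a'',a''+1,\ldots,a'-1\}$ are contained in \emph{every} such block and are therefore always full. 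A symmetric argument handles the central $b'-b''$ rows.

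Second, I would invoke the cyclic-shift description of a push from Section~\ref{s:4.1}: a horizontal push cyclically shifts only those rows whose leftmost (respectively, rightmost) cell is empty, and a full row contains no empty cell. Hence every token in a full row is fixed by every horizontal push, and, symmetrically, every token in a full column is fixed by every vertical push. A core token $(x,y)$ lies simultaneously in a core row (always full) and a core column (always full), so no push moves it; therefore every element of $G_C$ fixes it.

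The main obstacle is the first step: showing that the set of full columns, although it may slide within the bounding box, always forms a consecutive block that covers the central $a'-a''$ columns. This relies on combining the length-preservation of Observation~\ref{obs:compact} with the row-contiguity of Observation~\ref{obs:compact-shape}. Once this structural invariant is established, the ``freezing'' conclusion is an immediate consequence of the cyclic-shift rules for pushes.
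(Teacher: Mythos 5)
Your proof is correct and follows essentially the same route as the paper's: establish that the central $a'-a''$ columns and central $b'-b''$ rows are always full (using the invariance of row/column lengths and contiguity from Observations~\ref{obs:compact-shape} and~\ref{obs:compact}), and then note that full rows are unaffected by horizontal pushes and full columns by vertical pushes, so core tokens never move. You merely spell out in more detail the step the paper cites from Section~\ref{s:2} (that the full columns form a consecutive block containing the central ones), which is a welcome elaboration rather than a deviation.
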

\begin{proof}
If the core is empty, there is nothing to prove; hence, let us assume that $a'>a''$ and $b'>b''$. From Section~\ref{s:2}, we know that there are always exactly $a'$ contiguous full columns and $b'$ contiguous full rows, no matter how the tokens are pushed. Hence, the central $a-2a''=a'-a''$ columns (resp., the central $b-2b''=b'-b''$ rows) are always full, and are not affected by $\uppush$ or $\downpush$ pushes (resp., $\leftpush$ or $\rightpush$ pushes). Therefore, no push can affect the core tokens.
\end{proof}

\subsubsection*{Permutation Groups}
In order to understand the structure of $G_C$, we will review some notions of group theory and prove three technical lemmas.

\begin{definition}
A permutation group $G$ on $\{1,2,\dots,n\}$ is \emph{2-transitive} if, for every $1\leq x,y,w,z\leq n$ with $x\neq y$ and $w\neq z$, there is a permutation $\pi\in G$ such that $\pi(x)=w$ and $\pi(y)=z$.
\end{definition}

\begin{theorem}[Jones, \cite{jones}]\label{theorem-jones}
If $G$ is a 2-transitive permutation group on $\{1,2,\dots,n\}$ and $G$ contains a cycle of length $n-3$ or less, then $G$ contains all even permutations.\footnote{Jones' theorem in \cite{jones} holds more generally for \emph{primitive} permutation groups. The fact that all 2-transitive permutation groups are primitive is an easy observation.}\qed
\end{theorem}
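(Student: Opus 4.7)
The statement is attributed to Jones and cited to~\cite{jones}, so my plan is to invoke it directly as a black box rather than reprove it from scratch. For the reader's sanity, let me briefly outline the standard strategy behind such a result.

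The starting point is the observation that every 2-transitive permutation group is primitive: any non-trivial block system would have blocks of size at least $2$, and then two points in the same block could not be mapped to two points in different blocks, contradicting 2-transitivity. Thus the theorem reduces to a statement about primitive groups, to which the classical theory applies. The prototype is Jordan's theorem: a primitive group of degree $n$ containing a $p$-cycle for a prime $p \leq n-3$ already contains $\Alt{n}$. Jordan's argument builds up $3$-cycles by iterated conjugation and commutation, and then invokes the fact that $\Alt{n}$ is generated by $3$-cycles. Jones' refinement removes the primality assumption on the cycle length.

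The modern proof of Jones' theorem passes through the classification of finite 2-transitive groups (itself a consequence of CFSG), which splits such groups into affine and almost simple types, with socles drawn from a short explicit list. One then catalogues the cycle lengths appearing in each family and verifies that the presence of a cycle of length at most $n-3$ forces the group to contain $\Alt{n}$; all other cases are ruled out because the only short cycles they admit have length $n-2$, $n-1$, or $n$.

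The main obstacle is precisely this reliance on CFSG: a purely elementary argument handling composite cycle lengths --- where one cannot pass to a Sylow subgroup to extract a prime-order cycle and then run Jordan's commutator machinery --- appears to be significantly more delicate, which is why in the present paper we treat the result as a black box and focus on verifying its hypotheses in our setting.
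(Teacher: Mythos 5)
Your proposal matches the paper exactly: the paper states this result with an immediate \qed and a footnote, importing it from \cite{jones} as a black box rather than proving it, just as you do. Your added sketch of the background (2-transitivity implies primitivity, the Jordan-theorem prototype, and the reliance on the classification of finite simple groups) is consistent with the paper's footnote and is accurate, so there is nothing to correct.
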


To express cyclic permutations, we use the standard notation $\sigma=(s_1\ s_2\ s_3\ \dots\ s_k)$, occasionally adding commas between terms when doing so improves readability. The cycle $\sigma$ is the permutation that fixes all items except $s_1$, $s_2$, $\dots$, $s_k$ such that $\sigma(s_1)=s_2$, $\sigma(s_2)=s_3$, $\dots$, $\sigma(s_k)=s_1$. Since we are studying permutations puzzles, it is visually more convenient to interpret a permutation as acting on places rather than on items. Thus, for example, $(1\ 2\ 3)$ is understood as the cycle involving the tokens occupying the \emph{locations} labeled $1$, $2$, and $3$ rather than the \emph{tokens} labeled $1$, $2$, and $3$. Also, we will follow the convention to compose chains of permutations from left to right, which is the common one in permutation theory.

\begin{lemma}\label{lemma-perm1}
Let $\alpha=(1,2,\dots,a)$ and $\beta=(a-b+1,a-b+2,\dots,2a-b)$ be two cycles spanning $n=2a-b$ items, with $a\geq 2$ and $1\leq b<a$. Then, the permutation group generated by $\alpha$ and $\beta$ acts 2-transitively on $\{1,2,\dots,n\}$.
\end{lemma}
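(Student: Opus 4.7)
My plan is to prove 2-transitivity of $G=\langle\alpha,\beta\rangle$ via the standard criterion that a transitive permutation group is 2-transitive if and only if the stabilizer $G_x$ of some point $x$ is transitive on the remaining $n-1$ points. Transitivity of $G$ itself is immediate from the ``connected supports'' fact that if $\sigma$ and $\tau$ are cycles whose supports share at least one element then $\langle\sigma,\tau\rangle$ is transitive on $S_\sigma\cup S_\tau$: indeed, the supports of $\alpha$ and $\beta$ overlap (since $b\ge 1$) and together cover $\{1,\dots,n\}$. I will apply the criterion with $x=1$, which is convenient because $b<a$ forces $1\notin\{a-b+1,\dots,2a-b\}$, so $\beta\in G_1$ automatically.

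To populate $G_1$ enough to witness transitivity on $\{2,\dots,n\}$, I would use conjugates of $\beta$ by powers of $\alpha$. The conjugate $\alpha^{-k}\beta\alpha^k$ is a single cycle whose support is $\alpha^k(\{a-b+1,\dots,2a-b\})$, and it lies in $G_1$ exactly when $\alpha^{-k}(1)$ is not in the support of $\beta$. A direct modular computation of $\alpha^{-k}(1)$ shows this holds precisely for $k\in\{0\}\cup\{b+1,\dots,a-1\}$; moreover, for $k$ in the latter range, the support of the conjugate works out to $\{k-b+1,\dots,k\}\cup\{a+1,\dots,2a-b\}$.

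What remains is straightforward bookkeeping. As $k$ ranges over $\{b+1,\dots,a-1\}$, the ``windows'' $\{k-b+1,\dots,k\}$ are sliding intervals of length $b$ whose union is exactly $\{2,\dots,a-1\}$; together with the common tail $\{a+1,\dots,2a-b\}$ and the support of $\beta$ itself (which contributes the missing point $a$), the union of all these supports is exactly $\{2,\dots,n\}$. Moreover $\{a+1,\dots,2a-b\}$ is nonempty (since $b<a$) and lies in each of these supports, so every two of our cycles share points; another application of the connected-supports fact then yields that the subgroup of $G_1$ generated by these cycles is transitive on $\{2,\dots,n\}$, which is precisely what is needed. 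The main subtlety I expect is the modular-arithmetic bookkeeping, together with the degenerate case $b=a-1$, in which the range $\{b+1,\dots,a-1\}$ is empty; but here $\beta$ alone is a cycle on $\{2,\dots,a+1\}=\{2,\dots,n\}$, so $G_1$ is transitive on the complement of $\{1\}$ for trivial reasons.
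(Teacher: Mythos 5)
Your proof is correct, but it follows a genuinely different route from the paper's. The paper fixes the target pair $(w,z)=(a-b,\,n)$ and, by explicit case analysis on where $x$ and $y$ sit relative to $A$ and $B$, exhibits for every pair $(x,y)$ a concrete word in $\alpha,\beta$ carrying $(x,y)$ to $(w,z)$; 2-transitivity then follows by composing one such word with the inverse of another. You instead use the stabilizer criterion: $G$ is transitive because the supports of $\alpha$ and $\beta$ overlap and cover everything, and $G_1$ is transitive on $\{2,\dots,n\}$ because the conjugates $\alpha^{-k}\beta\alpha^{k}$ with $k\in\{0\}\cup\{b+1,\dots,a-1\}$ are cycles fixing $1$ whose supports pairwise intersect (each contains $B\setminus A=\{a+1,\dots,2a-b\}\neq\emptyset$) and jointly cover $\{2,\dots,n\}$. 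I verified the bookkeeping: $\alpha^{-k}(1)=a+1-k$ lies in the support of $\beta$ exactly when $1\le k\le b$, for $k\ge b+1$ the conjugate's support is indeed $\{k-b+1,\dots,k\}\cup\{a+1,\dots,2a-b\}$, the sliding windows union to $\{2,\dots,a-1\}$ with $\beta$ supplying the point $a$, and the degenerate case $b=a-1$ (which subsumes $a=2$) is handled correctly since there $\beta$ alone is a cycle on $\{2,\dots,n\}$. Your argument is shorter and avoids the paper's case explosion, at the price of invoking two standard facts (a transitive group is 2-transitive iff a point stabilizer is transitive on the remaining points, and cycles with pairwise-meeting supports generate a group transitive on the union); the paper's hands-on construction is self-contained and produces explicit permutation words for each pair, which matches the constructive flavor of how these lemmas are later applied.
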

\begin{proof}
Let $G$ be the group generated by $\alpha$ and $\beta$, and let $A$ (resp., $B$) be the set of items spanned by $\alpha$ (resp., $\beta$). Observe that $|A|=|B|=a$, $|A\cap B|=b$, and $|A\setminus B|=|B\setminus A|=a-b>0$. Assume that $w=a-b$ and $z=n$; we will prove that, for all $1\leq x,y\leq n$ with $x\neq y$, there is a permutation $\pi\in G$ such that $\pi(x)=w$ and $\pi(y)=z$. This will be sufficient to conclude that $G$ acts 2-transitively on $\{1,2,\dots,n\}$. Indeed, let $1\leq x,y,w', z'\leq n$ with $x\neq y$ and $w'\neq z'$. Since we have two permutations $\pi_1,\pi_2\in G$ such that $\pi_1(x)=w$,  $\pi_1(y)=z$, $\pi_2(w')=w$,  $\pi_2(z')=z$, then the permutation $\pi_1\pi_2^{-1}\in G$ maps $x$ to $w'$ and $y$ to $z'$, respectively.

Assume first that $x\in A$ and $y\in B$. Let $0\leq d<a$ be such that $\alpha^d(x)=w$, and let $0\leq d'<a$ be such that $\beta^{d'}(y)=z$. For symmetry reasons, we may assume without loss of generality that $d\leq d'$. Then, it is easy to see that $\alpha^d(y)=y'\in B$. Let $0\leq d''<a$ be such that $\beta^{d''}(y')=z$. Now, if we set $\pi=\alpha^d\beta^{d''}$, we have $\pi(x)=(\alpha^d\beta^{d''})(x)=\beta^{d''}(w)=w$ and $\pi(y)=(\alpha^d\beta^{d''})(y)=\beta^{d''}(y')=z$, as desired.

Assume now that $x\in B$ and $y\in A$. We can construct a permutation $\tau$ such that $\tau(x)=z$ and $\tau(y)=w$ as we did above. Then, we define $\sigma=\alpha^{-1}\beta^{-1}\alpha\beta^2$ if $b=1$ and $\sigma=\alpha^{-1}\beta^{-1}\alpha\beta$ if $b>1$. It is easy to see that $\sigma(w)=z$ and $\sigma(z)=w$. Let $\pi=\tau\sigma$; we have $\pi(x)=(\tau\sigma)(x)=\sigma(z)=w$ and $\pi(y)=(\tau\sigma)(y)=\sigma(w)=z$.

Finally, assume that $x,y\in A\setminus B$ (the case where $x,y\in B\setminus A$ is symmetric). We can iterate $\alpha$ until either $x$ or $y$ (say $y$) is mapped to $A\cap B$. At this point, we are in a situation where $x\in A$ and $y\in B$, which we already solved.
\end{proof}

\begin {figure}[t]
  \centering
  \includegraphics [scale=1] {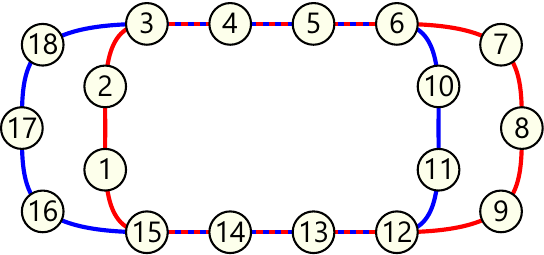}
  \caption {Example of the two cycles in Lemma~\ref{lemma-perm2} with $a=2$ and $b=4$.}
  \label {fig:2trans-1}
\end {figure}

\begin{lemma}\label{lemma-perm2}
Let $\alpha=(1,\dots,2a+b+1,3a+b+2,\dots,3a+2b+1)$ and $\beta=(a+1,\dots,a+b,2a+b+2,\dots,4a+2b+2)$ be two cycles spanning $n=4a+2b+2$ items, with $a\geq 0$ and $b\geq 1$. Then, the permutation group generated by $\alpha$ and $\beta$ acts 2-transitively on $\{1,2,\dots,n\}$.
\end{lemma}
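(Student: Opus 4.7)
The plan is to adapt the strategy of the proof of Lemma~\ref{lemma-perm1} to this richer overlap structure. Let $G = \langle \alpha, \beta \rangle$ and let $A$ and $B$ be the supports of $\alpha$ and $\beta$. A direct computation gives $|A| = |B| = 2a+2b+1$, while $A\cap B = \{a+1,\ldots,a+b\}\cup\{3a+b+2,\ldots,3a+2b+1\}$ is a union of two arcs of length $b$ each, and $|A\setminus B| = |B\setminus A| = 2a+1$. I would fix $w = a+b+1 \in A\setminus B$ and $z = 4a+2b+2 \in B\setminus A$. The critical feature of these choices is that every power of $\beta$ fixes $w$ (since $w\notin B$) and every power of $\alpha$ fixes $z$ (since $z\notin A$). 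As in Lemma~\ref{lemma-perm1}, it is enough to exhibit, for every distinct pair $x,y\in\{1,\ldots,n\}$, some $\pi\in G$ with $\pi(x)=w$ and $\pi(y)=z$; 2-transitivity then follows by composing two such permutations.

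The proof would then split on the location of $(x,y)$. If $x\in A$ and $y\in B$, I would pick $d,d'$ with $\alpha^d(x)=w$ and $\beta^{d'}(y)=z$, assume $d\leq d'$ by the $\alpha\leftrightarrow\beta$ symmetry, and verify that $\alpha^d(y)\in B$ because $\alpha$ and $\beta$ traverse each arc of $A\cap B$ in the same cyclic order; picking $d''$ with $\beta^{d''}(\alpha^d(y))=z$ and setting $\pi=\alpha^d\beta^{d''}$ then works because $\beta^{d''}$ fixes $w$. If $x\in B$ and $y\in A$, I would first apply the previous case (with roles swapped) to obtain $\tau\in G$ sending $x$ to $z$ and $y$ to $w$, then compose with a commutator-style element $\sigma\in G$ satisfying $\sigma(w)=z$ and $\sigma(z)=w$ (the analogue of $\alpha^{-1}\beta^{-1}\alpha\beta$ in Lemma~\ref{lemma-perm1}). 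Finally, if $x,y$ both lie in $A\setminus B$ (or both in $B\setminus A$), I would iterate $\alpha$ (resp.,\ $\beta$) until one of them enters $A\cap B$, reducing to the first case.

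The main obstacle I anticipate is producing the swap element $\sigma$: in Lemma~\ref{lemma-perm1} a short commutator suffices, but here the two-arc intersection means the naive analogue does not obviously restrict to a swap on $\{w,z\}$, and one must carefully engineer an explicit word in $\alpha$ and $\beta$ (likely depending mildly on whether $b=1$ or $b>1$, as in Lemma~\ref{lemma-perm1}) and check its action on $w$ and $z$ by hand. A secondary technicality is justifying the claim $\alpha^d(y)\in B$ when $d\leq d'$: it hinges on the observation that both cycles enter and leave the two arcs of $A\cap B$ in the same order, so the $\alpha$-orbit of any element of $A\cap B$ stays inside $B$ at least as long as its $\beta$-orbit would. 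Once these two pieces are in place, the remaining arguments are immediate analogues of those in the proof of Lemma~\ref{lemma-perm1}.
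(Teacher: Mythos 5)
Your reduction and your choice of fixed targets are reasonable in spirit, but the step you present as routine in the case $x\in A$, $y\in B$ is exactly where the two\nobreakdash-arc geometry breaks the argument, so the proposal has a genuine gap. The claim that $d\le d'$ forces $\alpha^d(y)\in B$ is false. In $\alpha$'s cyclic order, $A\cap B$ consists of two arcs of length $b$ separated by runs of $A\setminus B$ of lengths $a$ and $a+1$; hence an element $y\in A\cap B$, pushed forward by $\alpha$, exits $B$ after at most $b-1$ further steps and then stays outside $B$ for the next $a+1$ (or $a$) steps, whereas $d'$ can be as large as $|B|-1=2a+2b$, so the hypothesis $d\le d'$ gives essentially no control. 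Concretely, take $a=b=1$: then $n=8$, $\alpha=(1\ 2\ 3\ 4\ 6)$, $\beta=(2\ 5\ 6\ 7\ 8)$, your targets are $w=a+b+1=3$ and $z=8$, and for $x=1$, $y=2$ one gets $d=2$ and $d'=4$, so $d\le d'$, yet $\alpha^2(y)=4\notin B$; since $\beta$ fixes $4$, no word of the form $\alpha^2\beta^{d''}$ sends $y$ to $z$. Your supporting heuristic --- that the $\alpha$-orbit of an element of $A\cap B$ stays in $B$ at least as long as its $\beta$-orbit would --- is precisely what fails: the $\beta$-orbit never leaves $B$ at all, while the $\alpha$-orbit leaves almost immediately. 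A secondary flaw is the ``WLOG $d\le d'$'': the symmetry exchanging $\alpha$ and $\beta$ is conjugation by $i\mapsto i+n/2 \pmod n$, which sends your $w=a+b+1$ to $3a+2b+2\neq z$ whenever $a>0$, so it does not swap your chosen target pair and the reduction is not justified as stated.

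For comparison, the paper sidesteps this entirely: it fixes the \emph{source} pair at $x=n/2$ and $y=n$ (two points that \emph{are} exchanged by the symmetry above) and then performs the real work in the case $w,z\in A$ by splitting into five subcases according to the $\alpha$-distance $d$ from $w$ to $z$, exhibiting for each range of $d$ an explicit word (e.g.\ $\alpha^{a+b+1-d}\beta$ or $\alpha^{-a}\beta^{-a-1}\alpha^{a+1}$) that first parks $y$ at the anchor $a+1$ with $x$ at the correct $\alpha$-distance from it, and only then rotates by a power of $\alpha$; the mixed case $w\in B\setminus A$, $z\in A\setminus B$ is handled by an explicitly constructed swap word $\rho$ depending on whether $b=1$ or $b>1$. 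So the obstacle you flag (the swap element) is indeed present and handled by hand in the paper, but the case you treat as an immediate analogue of Lemma~\ref{lemma-perm1} is the one that actually requires a different argument, and as written your proof does not establish it.
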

\begin{proof}
The two cycles are illustrated in Figure~\ref{fig:2trans-1}. Let $G$ be the group generated by $\alpha$ and $\beta$, and let $A$ (resp., $B$) be the set of items spanned by $\alpha$ (resp., $\beta$). Assume that $x=n/2$ and $y=n$; we will prove that, for all $1\leq w,z\leq n$ with $w\neq z$, there is a permutation $\pi\in G$ such that $\pi(x)=w$ and $\pi(y)=z$. This suffices to conclude that $G$ acts 2-transitively on $\{1,2,\dots,n\}$, in a similar way to the previous lemma.

Assume first that $w,z\in A$, and let $0< d< |A|=2a+2b+1$ be such that $\alpha^d(w)=z$ (i.e., $d$ is the ``distance'' from $w$ to $z$ along $\alpha$). We construct a permutation $\tau$ as follows:
\begin{itemize}
\item If $1\leq d\leq a$ (and $a>0$), we set $\tau=\alpha^{a+b+1-d}\beta$.
\item If $a+1\leq d\leq a+b-1$ (and $b>1$), we set $\tau=\alpha^{a+b-d}\beta$.
\item If $d=a+b$, we set $\tau=\alpha^{-a}\beta^{-a-1}\alpha^{a+1}$.
\item If $a+b+1\leq d\leq 2a+b+1$, we set $\tau=\alpha^{a+b+1-d}\beta$.
\item If $2a+b+2\leq d\leq 2a+2b$ (and $b>1$), we set $\tau=\alpha^{a+b-d}\beta$.
\end{itemize}
It is straightforward to check that $\tau(y)=a+1=(\tau\alpha^d)(x)$; that is, $\tau$ places $y$ in position $a+1$ and places $x$ at the correct distance from $y$ along $\alpha$. Now we can set $\pi=\tau\alpha^i$, where $\alpha^i(a+1)=z$, so that $\pi(y)=z$ and $\pi(x)=(\tau\alpha^i)(x)=(\tau\alpha^d\alpha^i\alpha^{-d})(x)=(\alpha^i\alpha^{-d})(a+1)=\alpha^{-d}(z)=w$.

The case with $w,z\in B$ is symmetric and will be omitted.

Assume now that $w\in A\setminus B$ and $z\in B\setminus A$. Therefore, we can set $\pi=\alpha^i\beta^j$, where $\alpha^i(x)=w$ and $\beta^j(y)=z$. Since $y\notin A$, we have $\alpha^i(y)=y$; since $w\notin B$, we have $\beta^j(w)=w$. We conclude that $\pi(x)=w$ and $\pi(y)=z$.

Finally, assume that $w\in B\setminus A$ and $z\in A\setminus B$. We first construct a permutation $\rho$ that swaps $x$ and $y$ as follows:
\begin{itemize}
\item If $b=1$, we set $\rho=\alpha\beta\alpha^{a+1}\beta^a$.
\item If $b>1$, we set $\rho=\alpha\beta\alpha^{b-2}\beta\alpha^{a+1}\beta^a$.
\end{itemize}
It is straightforward to check that $\rho(x)=\rho(n/2)=n=y$ and $\rho(y)=\rho(n)=n/2=x$. Now that $x$ and $y$ are on the correct cycles, we can proceed as in the previous case.
\end{proof}

\begin{lemma}\label{lemma-perm3}
Let $A$ and $B$ be two finite sets such that $A\cap B\neq \emptyset$ and $A\setminus B\neq\emptyset$. Let $G$ be a permutation group on $A\cup B$ whose restriction to $A$ is 2-transitive and whose restriction to $B\setminus A$ is trivial, and let $\beta$ be a cycle spanning $B$. Then, the permutation group generated by $G$ and $\beta$ acts 2-transitively on $A\cup B$.
\end{lemma}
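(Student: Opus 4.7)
The plan is to show that the group $H := \langle G, \beta \rangle$ acts 2-transitively on $A \cup B$ by reducing everything to the 2-transitivity of $G|_A$. The key intermediate claim I will establish is the following: for every ordered pair $(x,y)$ of distinct elements of $A \cup B$, there is some $h \in H$ with both $h(x)$ and $h(y)$ in $A$. Once this is proved, 2-transitivity of $H$ follows easily. Given arbitrary distinct pairs $(x,y)$ and $(w,z)$ in $A \cup B$, I pick $h_1, h_2 \in H$ sending them into $A \times A$, then apply the 2-transitivity of $G|_A$ to obtain $g \in G$ with $g(h_1(x)) = h_2(w)$ and $g(h_1(y)) = h_2(z)$; using the paper's left-to-right convention, the composition $h_1 \, g \, h_2^{-1}$ then sends $x \mapsto w$ and $y \mapsto z$.

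To prove the key claim, I will case-split on the positions of $x$ and $y$ relative to the partition $A \cup B = (A \setminus B) \sqcup (A \cap B) \sqcup (B \setminus A)$. The case $x, y \in A$ is trivial (take $h$ to be the identity). For the case $x \in A$ and $y \in B \setminus A$, I first use transitivity of $G|_A$ (implied by 2-transitivity) to find $g \in G$ moving $x$ to some $x' \in A \setminus B$, which is nonempty by hypothesis; crucially, $g$ fixes $y$ since $G$ acts trivially on $B \setminus A$. Then I apply a suitable power $\beta^k$, chosen so that $\beta^k(y) \in A \cap B$; such $k$ exists because $\beta$ cycles through all of $B$ and $A \cap B$ is nonempty. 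This power fixes $x'$, because $x' \in A \setminus B$ lies outside the support of $\beta$. The symmetric case, $x \in B \setminus A$ and $y \in A$, is handled in the same way. Finally, if $x, y \in B \setminus A$, I apply one power of $\beta$ that places $x$ in $A \cap B$; if $y$ also happens to land in $A$ we are done, and otherwise $\beta^k(y) \in B \setminus A$ and we have reduced to the previous case.

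The only subtle point is the interplay between $\beta$ and $G$: whenever one of them is used to move a designated element into the desired region, we must ensure the other element is sitting where the move is harmless. The two hypotheses $A \cap B \neq \emptyset$ and $A \setminus B \neq \emptyset$ play complementary roles here. The first guarantees that iterating $\beta$ from any point of $B$ eventually lands inside $A$, so that elements of $B \setminus A$ can be imported into $A$. The second provides a ``safe parking spot'' in $A \setminus B$, a region of $A$ fixed pointwise by every power of $\beta$, where a point can wait undisturbed while $\beta$ is used on a different point. Once this bookkeeping is in place, the verification that the constructed $h \in H$ has the required property is immediate, and the reduction to 2-transitivity of $G|_A$ closes the argument.
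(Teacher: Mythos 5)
Your proof is correct. It differs from the paper's in the direction of the reduction: the paper fixes a distinguished source pair $(x,y)$ with $x\in A\setminus B$ and $y\in A\cap B$ and shows, by a case analysis on where the \emph{target} pair $(w,z)$ lies, that $(x,y)$ can be mapped onto any $(w,z)$ — constructing explicit words such as $\beta^d\tau\beta^i$ for the case $w,z\in B$. You instead normalize: every pair can be funneled into $A\times A$, after which the 2-transitivity of $G|_A$ does all the remaining work in one stroke. Both arguments rest on the same three structural facts (elements of $A\setminus B$ are fixed by every power of $\beta$, elements of $B\setminus A$ are fixed by $G$, and $A\cap B$ is the nonempty transfer region), and both use the standard composition trick $h_1 g h_2^{-1}$ to upgrade a one-sided reachability statement to full 2-transitivity. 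What your version buys is modularity: the case analysis only concerns the source pair and never needs explicit conjugation bookkeeping, since you never have to track where a point ends up under $\beta^d$ before correcting with a $\beta^i$ — you only have to park one point in the $\beta$-invariant region $A\setminus B$ while importing the other through $A\cap B$. The paper's version, in exchange, exhibits the witnessing permutations more explicitly, which is in keeping with the constructive flavor of the surrounding lemmas. One small point worth making explicit in a final write-up: the hypotheses $A\cap B\neq\emptyset$ and $A\setminus B\neq\emptyset$ force $|A|\geq 2$, so 2-transitivity of $G|_A$ really does imply the transitivity you invoke.
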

\begin{proof}
Let us fix $x\in A\setminus B$ and $y\in A\cap B$. For all $w,z\in A\cup B$ with $w\neq z$, we will prove that there is a permutation $\pi$ in the group generated by $G$ and $\beta$ such that $\pi(x)=w$ and $\pi(y)=z$. As in the previous lemmas, this is sufficient to conclude that the group acts 2-transitively on $A\cup B$.

If $w,z\in A$, there is a permutation $\pi\in G$ that maps $x$ to $w$ and $y$ to $z$, because $x,y\in A$ and $G$ acts 2-transitively on $A$.

Assume now that $w,z\in B$, and let $0< d< |B|$ be such that $\beta^d(w)=z$. Let $y'=\beta^d(y)$, and let $\tau\in G$ be a permutation such that $\tau(x)=y$ and $\tau(y')=y'$, which exists because $G$ acts 2-transitively on $A$ and fixes $B\setminus A$. We can set $\pi=\beta^d\tau\beta^i$, where $\beta^i(y')=z$. Clearly, $\pi(y)=(\beta^d\tau\beta^i)(y)=(\tau\beta^i)(y')=\beta^i(y')=z$, and $\pi(x)=(\beta^d\tau\beta^i)(x)=(\tau\beta^i)(x)=\beta^i(y)=\beta^{i-d}(y')=\beta^{-d}(z)=w$.

Let us now consider the case where $w\in A\setminus B$ and $z\in B\setminus A$. We can set $\pi=\beta^j\rho$, where $\beta^j(y)=z$ and $\rho\in G$ such that $\rho(x)=w$. It is easy to verify that $\pi(x)=(\beta^j\rho)(x)=\rho(x)=w$ and $\pi(y)=(\beta^j\rho)(y)=\rho(z)=z$.

Finally, if $w\in B\setminus A$ and $z\in A\setminus B$, we can swap $x$ and $y$ via a permutation in $G$, and then proceed as in the previous case.
\end{proof}

\subsubsection*{Generating Cycles}
We will now assume that all labels are distinct. Specifically, the $n$ tokens in $C$ are labeled $1$ to $n$ from left to right and from top to bottom, as in Figure~\ref{fig:core}. The general case will be discussed later.

We introduce three types of sequences of moves:
\begin{itemize}
\item Type-A $k$-sequence: $\langle\, \rightpush^k\ \rightpush\ \uppush\ \leftpush\ \downpush\ \leftpush^k\, \rangle$ for $0\leq k< a''$.
\item Type-B $k$-sequence: $\langle\, \uppush^k\ \uppush\ \rightpush\ \downpush\ \leftpush\ \downpush^k\, \rangle$ for $0\leq k< b''$.
\item Type-C $k$-sequence: $\langle\, \rightpush^k\ \uppush\ \rightpush\ \downpush\ \leftpush\ \leftpush^k\, \rangle$ for $0\leq k< a''$.
\end{itemize}

It is straightforward to check that a type-A $k$-sequence always produces a cycle of length $2a'+2b'-1$ (refer to Figure~\ref{fig:cyclemoves-1}), which involves the lattice points $(k,i)$ and $(a'+k,i)$ for all $0\leq i< b'$ (plus some other points in the rows $(0,\cdot)$ and $(b',\cdot)$). Such cycles are called \emph{type-A cycles}.

\begin {figure}
  \centering
  \includegraphics [scale=1] {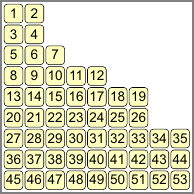}\qquad
  \includegraphics [scale=1] {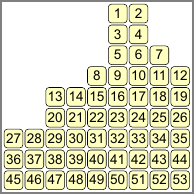}\qquad
  \includegraphics [scale=1] {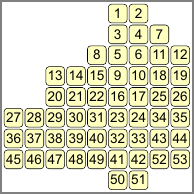}\\
  \vspace{0.5cm}
  \includegraphics [scale=1] {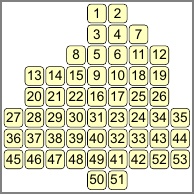}\qquad
  \includegraphics [scale=1] {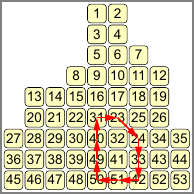}\qquad
  \includegraphics [scale=1] {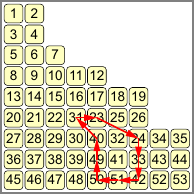}
  \caption {Performing a type-A $k$-sequence: $\langle\, \rightpush^{k+1}\ \uppush\ \leftpush\ \downpush\ \leftpush^k\, \rangle$. Recall that a push in a direction causes the tokens to ``fall'' in the opposite direction within the bounding box.}
  \label {fig:cyclemoves-1}
\end {figure}

Symmetrically, a type-B $k$-sequence produces a \emph{type-B cycle} of length $2a'+2b'-1$ involving (among others) the lattice points $(i,k)$ and $(i,b'+k)$ for all $0\leq i< a'$ (see Figure~\ref{fig:cyclemoves-2}). Thus, the type-A and the type-B cycles collectively cover all the non-core points that lie in one of the $a'$ full columns or in one of the $b'$ full rows (see Figures~\ref{fig:cycles-1} and~\ref{fig:cycles-2}).

\begin {figure}[h!]
  \centering
  \includegraphics [scale=1] {pics/m00.pdf}\qquad
  \includegraphics [scale=1] {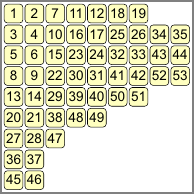}\qquad
  \includegraphics [scale=1] {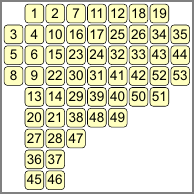}\\
  \vspace{0.5cm}
  \includegraphics [scale=1] {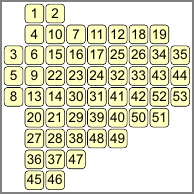}\qquad
  \includegraphics [scale=1] {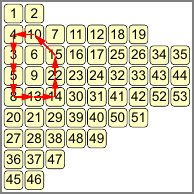}\qquad
  \includegraphics [scale=1] {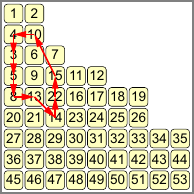}
  \caption {Performing a type-B $k$-sequence: $\langle\, \uppush^{k+1}\ \rightpush\ \downpush\ \leftpush\ \downpush^k\, \rangle$.}
  \label {fig:cyclemoves-2}
\end {figure}

A type-C $k$-sequence produces a cycle, as well, which we call \emph{type-C cycle} (see Figure~\ref{fig:cyclemoves-3}). However, unlike previous cycles, a type-C cycle's length may vary depending on $k$. It is easy to see that, if the row $(\cdot,i)$, with $0\leq i<b$, has length $\ell_i$, then the cycle produced by a type-C $k$-sequence with $a-\ell_i\leq k<a''$ involves (among others) the lattice point $(\ell_i+k-a'',i)$. Such a point lies at distance $a''-k-1$ from the rightmost full token in the row. Thus, the type-C cycles collectively cover all the tokens that are not in a full column (see Figure~\ref{fig:cycles-3}). We conclude that type-A, type-B, and type-C tokens collectively cover all non-core points (see Figure~\ref{fig:cycles-4}).

\begin {figure}
  \centering
  \includegraphics [scale=1] {pics/m00.pdf}\qquad
  \includegraphics [scale=1] {pics/m01.pdf}\qquad
  \includegraphics [scale=1] {pics/m02.pdf}\\
  \vspace{0.5cm}
  \includegraphics [scale=1] {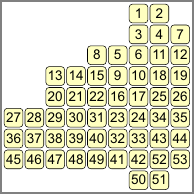}\qquad
  \includegraphics [scale=1] {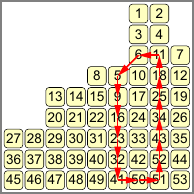}\qquad
  \includegraphics [scale=1] {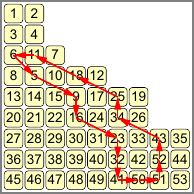}
  \caption {Performing a type-C $k$-sequence: $\langle\, \rightpush^k\ \uppush\ \rightpush\ \downpush\ \leftpush^{k+1}\, \rangle$.}
  \label {fig:cyclemoves-3}
\end {figure}

The next theorem states that, in most cases, the group $G_C$ is exactly the alternating group on the non-core tokens, i.e., the group of all even permutations on the $n-(a'-a'')(b'-b'')$ tokens not in the core (or on all $n$ tokens if the core is empty).
\begin{theorem}\label{theorem-gen1}
If $n/2\geq a'+b'+1$ and $a''b''>1$, then $G_C$ is the alternating group on the non-core tokens.
\end{theorem}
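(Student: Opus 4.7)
The overall approach will be to apply Jones' theorem (Theorem~\ref{theorem-jones}) to the action of $G_C$ on the set $S$ of non-core tokens. By Observation~\ref{obs:core}, every permutation in $G_C$ fixes the core pointwise, so $G_C$ acts on $S$; by Theorem~\ref{thm:even}, this action lies in $\Alt{m}$, where $m=|S|=n-(a'-a'')(b'-b'')$ (with the convention that the product is $0$ if the core is empty). It therefore suffices to establish two things: (a)~the action of $G_C$ on $S$ is 2-transitive, and (b)~$G_C$ contains, as a permutation of $S$, some cycle of length at most $m-3$.

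For (b), I would use a type-A cycle, of length $2a'+2b'-1$. The hypothesis $n/2\geq a'+b'+1$ immediately gives $2a'+2b'-1\leq n-3$, which settles the case of an empty core. When the core is non-empty, I would prove $m\geq 2a'+2b'+2$ by a short counting argument: the $a'$ full columns and $b'$ full rows alone carry $a'b+a''b'$ tokens, and combining this lower bound with the standing hypotheses $n/2\geq a'+b'+1$ and $a''b''>1$ yields the required estimate on $m$.

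The bulk of the work is (a). The plan is to build up 2-transitivity inductively, using as generators the cycles produced by type-A, type-B, and type-C $k$-sequences. I would first pick two such cycles whose joint incidence structure matches one of the combinatorial patterns of Lemma~\ref{lemma-perm1} or Lemma~\ref{lemma-perm2}---natural candidates are two appropriately chosen cycles of the same type whose supports overlap along a full row or column---and thereby obtain a 2-transitive action on the union of their supports. From there, I would adjoin the remaining type-A cycles, then all type-B cycles, and finally the type-C cycles one at a time, invoking Lemma~\ref{lemma-perm3} at each step, since each new cycle shares at least one token with the current support while contributing at least one new non-core token. Because type-A, type-B, and type-C cycles collectively cover all non-core points, as observed just after the definitions of the $k$-sequences, this process eventually yields 2-transitivity on the whole of $S$.

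The main obstacle will be the first step of (a): verifying that some carefully chosen pair of cycles fits one of the combinatorial patterns of Lemmas~\ref{lemma-perm1} and \ref{lemma-perm2}. The cycles produced by push sequences wind around the boundary of the bounding box in a geometric way, and matching them to the more linear patterns in those lemmas will require tracing the orbits explicitly. The two standing hypotheses then play complementary roles: $n/2\geq a'+b'+1$ furnishes the short cycle needed for Jones' theorem, while $a''b''>1$ guarantees enough distinct type-A and type-B generators to realize 2-transitivity; the degenerate case $a''=b''=1$ yields too few generators and will presumably be the subject of a separate theorem.
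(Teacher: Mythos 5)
Your proposal follows essentially the same route as the paper: type-A cycles of length $2a'+2b'-1$ supply the short cycle for Jones' theorem, the first two of them (the paper takes $k=0$ and $k=1$) are matched to Lemma~\ref{lemma-perm1} when $a'=1$ and to Lemma~\ref{lemma-perm2} when $a'>1$, and the remaining type-A, type-B, and type-C cycles are adjoined one at a time via Lemma~\ref{lemma-perm3} until all non-core tokens are covered. Your additional check that the number $m$ of non-core tokens satisfies $m\geq 2a'+2b'+2$ when the core is non-empty is a welcome refinement, since the paper only compares the cycle length against $n-3$ rather than $m-3$, which is what Jones' theorem applied to the non-core tokens actually requires.
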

\begin{proof}
We know from Theorem~\ref{thm:even} that $G_C$ only contains even permutations; also, by Observation~\ref{obs:core}, no permutation in $G_C$ can move any core tokens. Hence, it suffices to show that $G_C$ contains \emph{all} even permutations of the non-core tokens. By applying a reflection to $C$ if necessary, we may assume that $a''\geq b''$. Also, since $a''b''>1$, we have $a''\geq 2$, and therefore there are at least two distinct type-A cycles.

Let $\alpha$ and $\beta$ be the type-A cycles for $k=0$ and $k=1$, respectively. Observe that, if $a'=1$, then $\alpha$ and (the inverse of) $\beta$ satisfy the hypotheses of Lemma~\ref{lemma-perm1}; if $a'>1$, then $\alpha$ and $\beta$ satisfy the hypotheses of Lemma~\ref{lemma-perm2}. In both cases, the group $G_1\leq G_C$ generated by $\alpha$ and $\beta$ acts 2-transitively on the points spanned by $\alpha$ and $\beta$ and acts trivially on all other points.

The group $G_1$, together with the type-A cycle for $k=2$, satisfies the assumptions of Lemma~\ref{lemma-perm3}. Therefore, $G_C$ has a subgroup $G_2$ that acts 2-transitively on the points spanned by the first three type-A cycles and acts trivially on all other points. By repeatedly applying Lemma~\ref{lemma-perm3} to all remaining type-A cycles, we conclude that $G_C$ has a subgroup that acts 2-transitively on a set that includes all the non-core points in the $b'$ full rows of $C$.

By applying Lemma~\ref{lemma-perm3} again to the type-B cycles (the first of which properly intersects all of the full rows), we obtain a subgroup of $G_C$ that acts 2-transitively on a set that includes all the non-core points in the $a'$ full columns and in the $b'$ full rows. Finally, we can apply Lemma~\ref{lemma-perm3} to the type-C cycles (all of which properly intersect the full rows) to obtain a subgroup of $G_C$ that acts 2-transitively on all non-core tokens. This implies that $G_C$ itself acts 2-transitively on all non-core tokens, as well.

To conclude the proof, we recall that each type-A cycle has length $2a'+2b'-1$. Since $n/2\geq a'+b'+1$, these cycles have length at most $n-3$. Thus, $G_C$ contains all even permutations of the non-core tokens, due to Theorem~\ref{theorem-jones}.
\end{proof}

\begin {figure}[h!]
  \centering
  \includegraphics [scale=1] {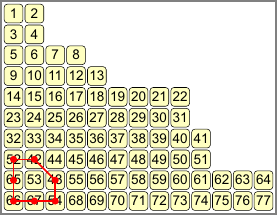}\qquad
  \includegraphics [scale=1] {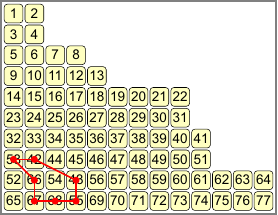}\qquad
  \includegraphics [scale=1] {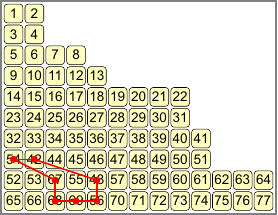}\\
  \vspace{0.5cm}
  \includegraphics [scale=1] {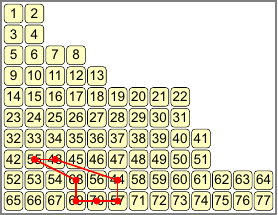}\qquad
  \includegraphics [scale=1] {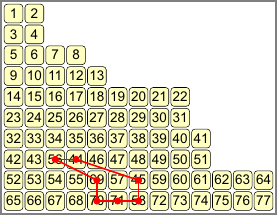}\qquad
  \includegraphics [scale=1] {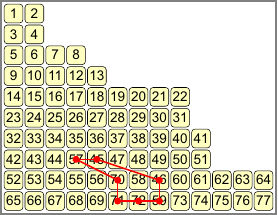}\\
  \vspace{0.5cm}
  \includegraphics [scale=1] {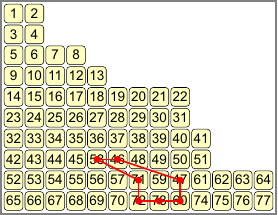}\qquad
  \includegraphics [scale=1] {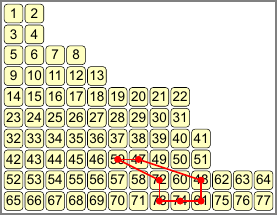}\qquad
  \includegraphics [scale=1] {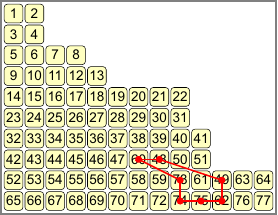}\\
  \vspace{0.5cm}
  \includegraphics [scale=1] {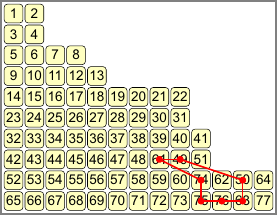}\qquad
  \includegraphics [scale=1] {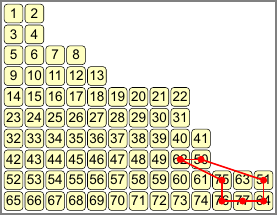}
  \caption {The type-A cycles span, among others, all the (non-core) tokens in the $b'$ full rows.}
  \label {fig:cycles-1}
\end {figure}

\newpage

\begin {figure}[h!]
  \centering
  \includegraphics [scale=1] {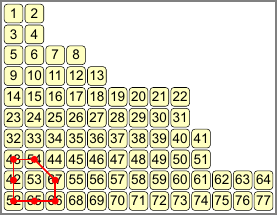}\qquad
  \includegraphics [scale=1] {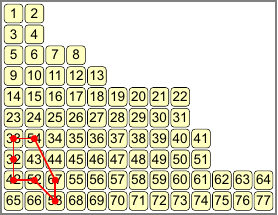}\qquad
  \includegraphics [scale=1] {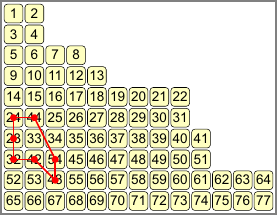}\\
  \vspace{0.5cm}
  \includegraphics [scale=1] {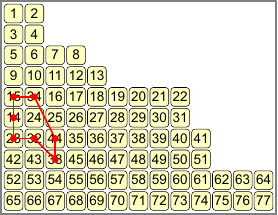}\qquad
  \includegraphics [scale=1] {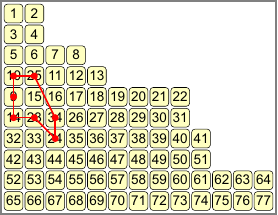}\qquad
  \includegraphics [scale=1] {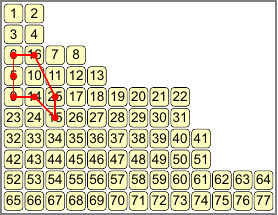}\\
  \vspace{0.5cm}
  \includegraphics [scale=1] {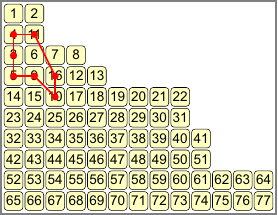}\qquad
  \includegraphics [scale=1] {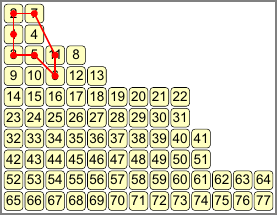}
  \caption {The type-B cycles span, among others, all the (non-core) tokens in the $a'$ full columns.}
  \label {fig:cycles-2}
\end {figure}

\newpage

\begin {figure}[h!]
  \centering
  \includegraphics [scale=1] {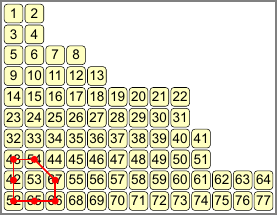}\qquad
  \includegraphics [scale=1] {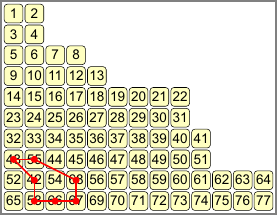}\qquad
  \includegraphics [scale=1] {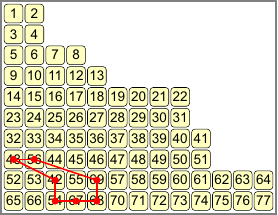}\\
  \vspace{0.5cm}
  \includegraphics [scale=1] {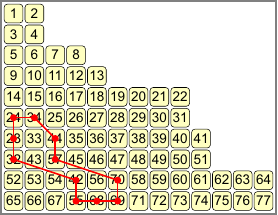}\qquad
  \includegraphics [scale=1] {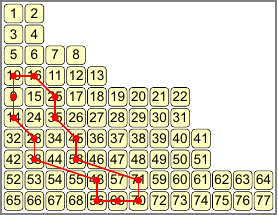}\qquad
  \includegraphics [scale=1] {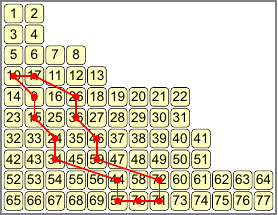}\\
  \vspace{0.5cm}
  \includegraphics [scale=1] {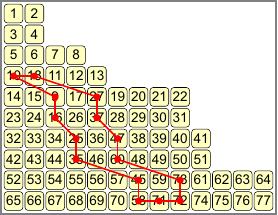}\qquad
  \includegraphics [scale=1] {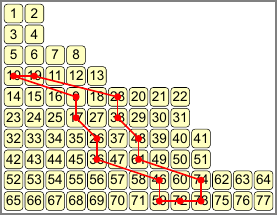}\qquad
  \includegraphics [scale=1] {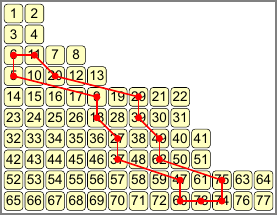}\\
  \vspace{0.5cm}
  \includegraphics [scale=1] {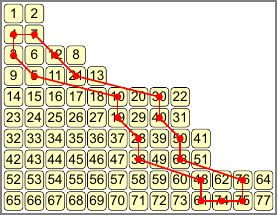}\qquad
  \includegraphics [scale=1] {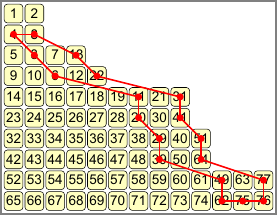}
  \caption {The type-C cycles span, among others, all the tokens that are not in the $a'$ full columns.}
  \label {fig:cycles-3}
\end {figure}

\newpage

\begin {figure}[h!]
  \centering
  \includegraphics [scale=1.25] {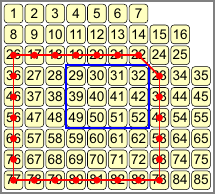}\qquad
  \includegraphics [scale=1.25] {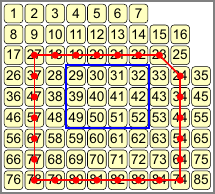}\qquad
  \includegraphics [scale=1.25] {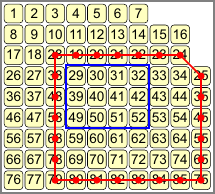}\\
  \vspace{0.5cm}
  \includegraphics [scale=1.25] {pics/b02.pdf}\qquad
  \includegraphics [scale=1.25] {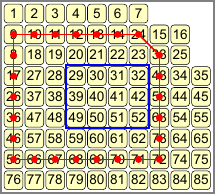}\qquad
  \includegraphics [scale=1.25] {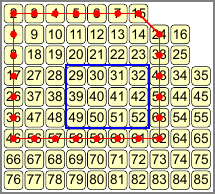}\\
  \vspace{0.5cm}
  \includegraphics [scale=1.25] {pics/b02.pdf}\qquad
  \includegraphics [scale=1.25] {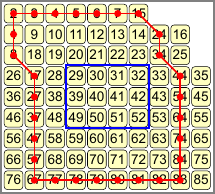}\qquad
  \includegraphics [scale=1.25] {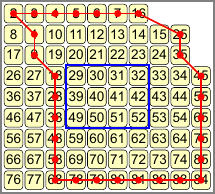}
  \caption {The type-A cycles (top row), the type-B cycles (middle row), and the type-C cycles (bottom row) collectively span all the non-core tokens.}
  \label {fig:cycles-4}
\end {figure}

\subsubsection*{Special Configurations}
We will now discuss all the configurations of the Permutation Puzzle not covered by Theorem~\ref{theorem-gen1}.

\begin{itemize}
\item If $a''=b''=0$, i.e., there are no empty cells in the bounding box, then clearly no token can be moved, and $G_C$ is the trivial permutation group (Figure~\ref{fig:reconf-1}, left).
\item Let $a''=b''=1$, i.e., there is exactly one empty cell, located at the top-right corner of the bounding box. The core includes all the tokens, except the ones on the perimeter of the bounding box, which are spanned by the type-A cycle with $k=0$. It is easy to see that the only possible permutations are iterations of this cycle and its inverse. Therefore, $G_C$ is isomorphic to the cyclic group $C_{2a+2b-5}$ (Figure~\ref{fig:reconf-1}, right).
\end{itemize}

\begin {figure}
\centering
  \includegraphics [scale=1.5] {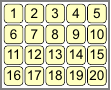}\qquad
  \includegraphics [scale=1.5] {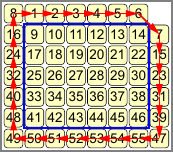}
  \caption {Two configurations with $a''b''\leq 1$.}
  \label {fig:reconf-1}
\end {figure}

In the following, we will assume that $a''\geq b''$ and $a''\geq 2$, and we will discuss all configurations where $n/2< a'+b'+1$. We will invoke some theorems from~\cite{SVU22} about \emph{cyclic shift puzzles}.

\begin{itemize}
\item Let $b=2$, and let the two rows have length $1$ and $3$, respectively (Figure~\ref{fig:reconf-2}, top row, first image). The two type-A cycles $(1\ 2\ 3)$ and $(1\ 3\ 4)$ form a 2-connected $(3,3)$-puzzle involving all tokens, and therefore $G_C=\Alt{n}$, due to~\cite[Theorem~2]{SVU22}.
\item Let $b=2$, and let the two rows have length $1$ and $4$, respectively (Figure~\ref{fig:reconf-2}, top row, second image). The first and third type-A cycles $(1\ 2\ 3)$ and $(1\ 4\ 5)$ form a 1-connected $(3,3)$-puzzle involving all tokens, and therefore $G_C=\Alt{n}$, due to~\cite[Theorem~1]{SVU22}.
\item Let $b=2$, and let the two rows have length $2$ and $4$, respectively (Figure~\ref{fig:reconf-2}, top row, third image). We denote the two type-A cycles by $\alpha=(1\ 3\ 4\ 5\ 2)$ and $\beta=(1\ 4\ 5\ 6\ 2)$. It is easy to see that $G_C$ is generated by $\alpha$ and $\beta$, because any non-trivial sequence of four pushes necessarily goes through a configuration whose canonical form yields one the permutations $\alpha$, $\beta$, $\alpha^{-1}$, or $\beta^{-1}$.

In order to determine $G_C$, we transform $\alpha$ and $\beta$ by a suitable outer automorphism $\psi\colon \Sym{6}\to \Sym{6}$. Since $\psi$ is an automorphism, the group $G'_C$ generated by $\psi(\alpha)$ and $\psi(\beta)$ is isomorphic to $G_C$. The automorphism $\psi$ is defined on a set of generators of $\Sym{6}$ as follows (cf.~\cite[Corollary~7.13]{rotman}):
\begin{align*}
\psi((1\ 2))&=(1\ 5)(2\ 3)(4\ 6),\\
\psi((1\ 3))&=(1\ 4)(2\ 6)(3\ 5),\\
\psi((1\ 4))&=(1\ 3)(2\ 4)(5\ 6),\\
\psi((1\ 5))&=(1\ 2)(3\ 6)(4\ 5),\\
\psi((1\ 6))&=(1\ 6)(2\ 5)(3\ 4).
\end{align*}
We have
\begin{align*}
\alpha&=(1\ 3\ 4\ 5\ 2) = (1\ 2) (1\ 5) (1\ 4) (1\ 3),\\
\beta&=(1\ 4\ 5\ 6\ 2) = (1\ 2) (1\ 6) (1\ 5) (1\ 4).
\end{align*}
Therefore, the two generators of $G'_C$ are
\begin{align*}
\psi(\alpha)&=\psi((1\ 2)) \psi((1\ 5)) \psi((1\ 4)) \psi((1\ 3)) = (1\ 6\ 2\ 3\ 5),\\
\psi(\beta)&=\psi((1\ 2)) \psi((1\ 6)) \psi((1\ 5)) \psi((1\ 4)) = (1\ 3\ 2\ 6\ 5).
\end{align*}
Both generators $\psi(\alpha)$ and $\psi(\beta)$ leave the number $4$ fixed, and thus $G'_C$ is isomorphic to a subgroup of $\Sym{5}$. Moreover, the generators are cycles of odd length, and therefore they produce only even permutations. Hence, $G'_C$ is isomorphic to a subgroup of $\Alt{5}$. Observe that $\psi(\alpha)\psi(\beta)=(1\ 5\ 6)$, which is a 3-cycle involving consecutive elements of the 5-cycle $\psi(\alpha)$. These two cycles generate all even permutations on $\{1,2,3,5,6\}$ (cf.~\cite[Proposition~1]{SVU22}).

\begin {figure}
\centering
  \includegraphics [scale=1.5] {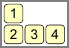}\qquad
  \includegraphics [scale=1.5] {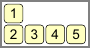}\qquad
  \includegraphics [scale=1.5] {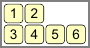}\qquad
  \includegraphics [scale=1.5] {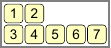}\\
  \vspace{0.5cm}
  \includegraphics [scale=1.5] {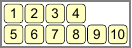}\qquad
  \includegraphics [scale=1.5] {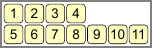}\qquad
  \includegraphics [scale=1.5] {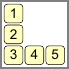}\qquad
  \includegraphics [scale=1.5] {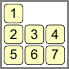}
  \caption {Some special configurations with $b=2$ and $b=3$ rows.}
  \label {fig:reconf-2}
\end {figure}

We conclude that $G'_C$, and therefore $G_C$, is isomorphic to $\Alt{5}$, which is a group of order $60$ and index $12$ in $\Sym{6}$. A permutation $\pi\in\Sym{6}$ is in $G_C$ if and only if $\psi(\pi)$ is even and leaves the number $4$ fixed.
\item Let $b=2$, and let the two rows have length $2$ and $5$, respectively (Figure~\ref{fig:reconf-2}, top row, fourth image). We denote the three type-A cycles by $\alpha=(1\ 3\ 4\ 5\ 2)$, $\beta=(1\ 4\ 5\ 6\ 2)$, and $\gamma=(1\ 5\ 6\ 7\ 2)$. Consider the permutations $(\alpha\beta^{-1}\gamma)^2=(2\ 5\ 4)$ and $\beta\gamma\alpha=(1\ 3\ 5\ 4\ 2\ 6\ 7)$. We have a 3-cycle involving consecutive elements of a 7-cycle, which generate all even permutations on $\{1,2,\dots,7\}$ (cf.~\cite[Proposition~1]{SVU22}). We conclude that $G_C=\Alt{n}$.
\item Let $b=2$, and let the two rows have length $\ell\geq 3$ and $\ell+2$, respectively (Figure~\ref{fig:reconf-2}, bottom row, first image). We denote the two type-A cycles by $\alpha=(1, \ell+1, \dots, n-1, \ell, \dots, 2)$ and $\beta=(1, \ell+2, \dots, n, \ell, \dots, 2)$. The permutation $\beta^{-2}(\alpha^2\beta^{-1}\alpha^{-2}\beta)^2\beta^2=(n-3, n-2, n)$ is a 3-cycle that, together with $\alpha$, forms a 2-connected $(3,n-1)$-puzzle involving all tokens. We conclude that $G_C=\Alt{n}$, due to~\cite[Theorem~2]{SVU22}.
\item Let $b=2$, and let the two rows have length $\ell\geq 3$ and $\ell+3$, respectively (Figure~\ref{fig:reconf-2}, bottom row, second image). Observe that this configuration is the same as the previous one, except for an extra token, labeled $n$, in the bottom row. In particular, the first two type-A cycles are the same, and generate all even permutations on $\{1,2,\dots n-1\}$, including the 3-cycle $(1,\ell+1,\ell+2)$. This 3-cycle forms a 1-connected $(3,n-2)$-puzzle with the third type-A cycle. Since all tokens are involved in this puzzle, we conclude that $G_C=\Alt{n}$, due to~\cite[Theorem~1]{SVU22}.
\item Let $b=3$, and let the three rows have length $1$, $1$, and $3$, respectively (Figure~\ref{fig:reconf-2}, bottom row, third image). The two type-A cycles $(2\ 3\ 4)$ and $(2\ 4\ 5)$ form a 2-connected $(3,3)$-puzzle, and therefore they generate all even permutations on $\{2,3,4,5\}$, due to~\cite[Theorem~2]{SVU22}. In particular, they generate the 3-cycle $(3\ 4\ 5)$, which forms a 1-connected $(3,3)$-puzzle with the type-B cycle $(1\ 2\ 4)$, involving all tokens. By~\cite[Theorem~1]{SVU22}, $G_C=\Alt{n}$.
\item Let $b=3$, and let the three rows have length $1$, $3$, and $3$, respectively (Figure~\ref{fig:reconf-2}, bottom row, fourth image). We denote the two type-A cycles by $\alpha=(1\ 2\ 5\ 6\ 3)$ and $\beta=(1\ 3\ 6\ 7\ 4)$. Consider the permutations $(\beta^2\alpha^{-1})^2=(2\ 5\ 6)$ and $\alpha\beta^{-1}=(1\ 4\ 7\ 3\ 2\ 5\ 6)$. We have a 3-cycle involving consecutive elements of a 7-cycle, which generate all even permutations on $\{1,2,\dots,7\}$ (cf.~\cite[Proposition~1]{SVU22}). We conclude that $G_C=\Alt{n}$.
\end{itemize}

\begin {figure}
\centering
  \includegraphics [scale=1.5] {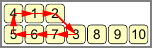}\qquad
  \includegraphics [scale=1.5] {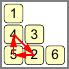}\qquad
  \includegraphics [scale=1.5] {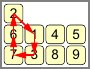}\\
  \vspace{0.5cm}
  \includegraphics [scale=1.5] {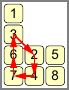}\qquad
  \includegraphics [scale=1.5] {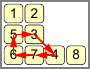}\qquad
  \includegraphics [scale=1.5] {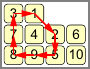}
  \caption {Some small configurations where at least three tokens are not covered by the first type-A cycle.}
  \label {fig:reconf-3}
\end {figure}

We will now prove that all configurations not listed above satisfy $n/2\geq a'+b'+1$, i.e., they have at least three tokens not lying on the first type-A cycle.
\begin{itemize}
\item If $b=1$, then $a''=b''=0$. This case has already been discussed (Figure~\ref{fig:reconf-1}, left).
\item For $b=2$, we have discussed all cases where $a''\leq 3$. If $a''\geq 4$, then $n=2a'+a''$, while a type-A cycle spans $2a'+1$ tokens, and leaves out at least three tokens (Figure~\ref{fig:reconf-3}, top row, first image).
\item Let $b\geq 3$ and $a'=b'=1$. Then, a type-A cycle spans three tokens. We are assuming that $a''\geq 2$, and so $a\geq 3$, implying that $n\geq 5$. The only case where $n=5$ has already been discussed (it is the case with $b=3$ rows of length $1$, $1$, and $3$, illustrated in Figure~\ref{fig:reconf-2}, bottom row, third image); in all other cases we have $n\geq 6$, and thus at least three tokens are left out of the type-A cycle (Figure~\ref{fig:reconf-3}, top row, second image).
\item Let $b\geq 3$, $a'=1$, and $b'=2$. Then, a type-A cycle spans five tokens. We are assuming that $a''\geq 2$, and so $a\geq 3$, implying that $n\geq 7$. The only case where $n=7$ has already been discussed (it is the case with $b=3$ rows of length $1$, $3$, and $3$, illustrated in Figure~\ref{fig:reconf-2}, bottom row, fourth image); in all other cases we have $n\geq 8$, and thus at least three tokens are left out of the type-A cycle (Figure~\ref{fig:reconf-3}, top row, third image).
\item Let $b\geq 3$, $a'=1$, and $b'\geq 3$ (Figure~\ref{fig:reconf-3}, bottom row, first image). Since we are assuming that $a''\geq 2$, the rightmost column contains at least three tokens, which are not included in the first type-A cycle.
\item Let $b\geq 3$, $a'\geq 2$, and $b'=1$ (Figure~\ref{fig:reconf-3}, bottom row, second image). Since we are assuming that $a''\geq 2$, the rightmost token is not included in the first type-A cycle. Moreover, the top row contains at least two tokens, which are not in the type-A cycle. In total, there are at least three tokens not spanned by the cycle.
\item Let $b\geq 3$, $a'\geq 2$, and $b'\geq 2$ (Figure~\ref{fig:reconf-3}, bottom row, third image). Since we are assuming that $a''\geq 2$, the rightmost column contains at least two tokens, which are not included in the first type-A cycle. Moreover, the token at $(1,1)$ is not contained in the first type-A cycle, either. In total, there are at least three tokens not spanned by the cycle.
\end{itemize}

\subsubsection*{Arbitrary Labels}

We now discuss the case where not all labels are distinct. Clearly, if all the non-core tokens have distinct labels, then our previous analysis carries over verbatim.

Let us now assume that at least two non-core tokens $x$ and $y$ have the same label. In this case, we identify two permutations $\pi_1,\pi_2\in G_C$ if the configurations $C_1,C_2\colon \mathcal L\to \Sigma\cup\{{\rm empty}\}$ they produce are equal, i.e., $C_1(p)=C_2(p)$ for all $p\in\mathcal L$.

Assume that $G_C$ contains all even permutations of the non-core tokens, which we know to be always the case except in some special configurations. Let $\pi$ be any permutation of the non-core tokens. If $\pi$ is even, then $\pi\in G_C$, and we can reconfigure the tokens to match $\pi$. If $\pi$ is odd, then let $\pi'=(x\ y)\pi$. Since $\pi'$ is even, we have $\pi'\in G_C$. However, $\pi$ and $\pi'$ produce equal configurations, because $x$ and $y$ have the same label, and therefore we can reconfigure the tokens to match $\pi$, as well.

We now have a complete solution to the Permutation Puzzle.

\begin{theorem}\label{theorem-final}
If the configuration $C$ is compact, then the group $G_C$ of possible permutations is as follows.
\begin{itemize}
\item If no cell in the bounding box is empty, then $G_C$ is the trivial group.
\item If exactly one cell in the bounding box is empty, then $G_C$ is generated by the cycle of the non-core tokens taken in clockwise order.
\item If there are exactly $6$ tokens and exactly $2$ empty cells in the bounding box, then $G_C$ is isomorphic to the alternating group $\Alt{5}$.
\item In all other cases, $G_C$ is the alternating group on the non-core tokens. Hence, if at least two non-core tokens have the same label, then all permutations of the non-core labels can be obtained; otherwise, only the even permutations of the non-core labels can be obtained.\qed
\end{itemize}
\end{theorem}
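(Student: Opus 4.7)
The proof is primarily a matter of bookkeeping: all four cases follow by combining the upper bound from Section~\ref{s:4.1} with the matching lower bound and case analysis developed throughout Section~\ref{s:4.2}. The plan is to first fix the upper bound on $G_C$ (parity and core immobility), then invoke the generic construction for the ``large'' case, then dispatch the finitely many exceptional configurations, and finally lift the result to arbitrary labelings.

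First I would establish the upper bound. By Theorem~\ref{thm:even}, every $\pi\in G_C$ is even, and by Observation~\ref{obs:core}, every $\pi\in G_C$ fixes all core tokens. These two facts already force the first two items of the theorem: if no cell in the bounding box is empty, then $C$ is the full $a\times b$ rectangle, every push produces the same configuration (there is no room to shift any row or column), so $G_C=\{\mathrm{id}\}$; if exactly one cell is empty, then (up to symmetry) $a''=b''=1$, the core contains every interior token, and the only non-core tokens are those on the perimeter of the bounding box. As observed in the ``special configurations'' discussion, the perimeter cycle is the unique type-A cycle, and every sequence of pushes simply iterates this cycle or its inverse; hence $G_C$ is the cyclic group generated by this perimeter rotation.

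Next I would handle the generic case using the work already done. For configurations satisfying the hypotheses $n/2\geq a'+b'+1$ and $a''b''>1$, Theorem~\ref{theorem-gen1} already yields $G_C=\Alt{n-(a'-a'')(b'-b'')}$ on the non-core tokens, which is precisely the last bullet of Theorem~\ref{theorem-final}. For configurations violating these hypotheses, I would appeal to the enumerated case analysis in the ``special configurations'' subsection: the bookkeeping there shows that, up to reflection, the only surviving cases are the configurations depicted in Figures~\ref{fig:reconf-1} and~\ref{fig:reconf-2}. All of those, except for the $b=2$ configuration with row lengths $2$ and $4$, are shown to satisfy $G_C=\Alt{n}$ on the non-core tokens by explicit combinations of type-A, type-B and type-C cycles and appeals to the cyclic-shift puzzle results of~\cite{SVU22}; the single remaining $b=2$ configuration with row lengths $2$ and $4$ has exactly $6$ tokens and $2$ empty cells, and the outer-automorphism computation in that subsection identifies $G_C\cong \Alt{5}$, giving the third bullet.

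Finally I would lift the result from distinctly labeled to arbitrarily labeled tokens exactly as in the ``arbitrary labels'' paragraph: if two non-core tokens $x,y$ carry the same label and $\pi$ is an odd permutation of the non-core tokens, then $\pi'=(x\ y)\pi$ is even, hence lies in $G_C$ by the previous step, and produces the same configuration as $\pi$; thus in the presence of repeated labels one obtains all permutations of the non-core labels, and otherwise only the even ones. The main obstacle in this proof is not any single argument but rather verifying that the case split underlying Section~\ref{s:4.2} really is exhaustive, that is, that every configuration with $a''\geq b''$ and $a''\geq 2$ either falls under the hypotheses of Theorem~\ref{theorem-gen1} or is one of the finitely many small configurations handled explicitly; this has already been checked via the enumeration at the end of the ``special configurations'' subsection (Figure~\ref{fig:reconf-3}), and reusing that enumeration is what makes the assembly into Theorem~\ref{theorem-final} routine.
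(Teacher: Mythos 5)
Your proposal is correct and follows essentially the same route as the paper: the theorem is stated with an immediate \qed precisely because it is the assembly of Theorem~\ref{thm:even}, Observation~\ref{obs:core}, Theorem~\ref{theorem-gen1}, the exhaustive ``special configurations'' case analysis (including the $\Alt{5}$ outer-automorphism computation for the six-token, two-empty-cell case), and the ``arbitrary labels'' lifting argument, exactly as you describe. No gaps.
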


\section{Conclusions and Open Problems}\label{s:5}
Concerning the Compaction Puzzle, we showed that all sparse configurations of $n=ab$ tokens can be pushed into an $a\times b$ box if and only if $a\leq 2$ or $b\leq 2$ or $a=b=3$ (Theorem~\ref{thm:compaction}), and that there exist sparse configurations capable of becoming any compact configuration of size $n$ (Observation~\ref{obs:universal-conf}). 
We leave the following as an open problem.

\begin{openproblem}
	\label{op:shape}
Is it NP-complete to decide whether a given unlabeled sparse configuration can be pushed into a given compact configuration?
\end{openproblem}

We do not know the answer to this problem even if we restrict the target configuration to be a rectangle (note that the NP-completeness proof in~\cite{trash} does not hold for sparse initial configurations).

Concerning the Permutation Puzzle, we have shown that, given two same-shaped compact configurations, all even permutation of the non-core tokens are reachable, with a few minor exceptions (Theorem~\ref{theorem-final}). Notably, if at least two tokens have the same label, then \emph{all} permutations of the non-core labels can be obtained.

In particular, the puzzle in Figure~\ref{fig:fun2022-puzzle} is solvable, because it features a compact configuration with no core tokens, more than two empty cells, and at least two same-labeled (i.e., same-colored) tokens. Thus, according to Theorem~\ref{theorem-final}, any permutation of the labels is feasible in this puzzle, and we only have to verify that the number of tokens with any given label in the initial configuration matches the number of tokens with that label in the goal configuration.

We remark that, even though our proof relies on Theorem~\ref{theorem-jones}, which is a deep, non-constructive result, we have nonetheless uncovered a great deal of structure in the Permutation Puzzles. We claim that, when a Permutation Puzzle is solvable, there is a solution within $O(n^4)$ pushes which is computable by a polynomial-time algorithm. However, we conjecture that finding the \emph{shortest} solution is NP-hard (this is known to be the case in other token-shifting puzzles~\cite{SVU22,torus,15puzzle}).

\begin{openproblem}
Is it NP-hard to find the shortest solution in the Permutation Puzzle?
\end{openproblem}

When the initial configuration is not compact, the pushes applied in order to obtain a compact one can affect the permutation, making the problem substantially harder. 
For example, the pushes performed before the configuration becomes incompressible may affect the configuration of core tokens.
Furthermore, a solution to the general problem of transforming an arbitrary labeled configuration into a target compact one would imply a solution to Open Problem~\ref{op:shape}.

\begin{openproblem}
	Given a (not necessarily compact) labeled configuration, is there a sequence of pushes that transforms it into a (not necessarily compact) target configuration?
\end{openproblem}

\paragraph{Acknowledgments.} The authors would like to thank the anonymous reviewers for helpful suggestions that greatly improved the readability of this paper.

{\small
\bibliographystyle{abbrv}
\bibliography{references}
}

\end{document}